\documentclass[12pt]{amsart}


\usepackage{amsfonts,amsmath,latexsym,amssymb,verbatim,amsbsy,amsthm}
\usepackage{graphicx}
\usepackage{soul}
\usepackage{nicefrac}
\usepackage{bm}


\usepackage[top=1in, bottom=1in, left=1in, right=1in]{geometry}

\usepackage[dvipsnames]{xcolor}

\usepackage[colorlinks=true, pdfstartview=FitV, linkcolor=RoyalBlue,citecolor=ForestGreen, urlcolor=blue]{hyperref}

\numberwithin{equation}{section}


\renewcommand{\geq}{\geqslant}

\renewcommand{\leq}{\leqslant}

\theoremstyle{plain}
\newtheorem{THEOREM}{Theorem}[section]

\newtheorem{theorem}[THEOREM]{Theorem}
\newtheorem{corollary}[THEOREM]{Corollary}

\newtheorem{proposition}[THEOREM]{Proposition}

\theoremstyle{assume}
\newtheorem{assume}[THEOREM]{Assumption}

\theoremstyle{definition}

\newtheorem{definition}[THEOREM]{Definition}

\theoremstyle{remark}

\newtheorem{remark}[THEOREM]{Remark}





\def \a {\alpha} 



\def\bu{{\mathbf u}}


\def \bc {{\bf c}}
\def \bb {{\bf b}}

\def \bx {{\mathbf x}}
\def \by {{\mathbf y}}
\def \bu {{\mathbf u}}
\def \bv {{\mathbf v}}
\def \bw {{\mathbf w}}

\newcommand{\vc}[1]{{\bf #1}}
\newcommand{\vv}{\vc{\bv}}
\newcommand{\vx}{\vc{\bx}}







\renewcommand{\S}{\ensuremath{\mathbb{S}}} 



 %
 %
 %
 %
 %
 %
 %
 %
 %
 %
 %
 %



\def \drho {\, \mbox{\upshape{d}}\rho}
\def \dh  {\, \mbox{\upshape{d}}h}


\def \dt  {\, \mbox{\upshape{d}}t}
\def \dh  {\, \mbox{d}h}

\def \dy  {\, \mbox{\upshape{d}}y}

\def \ddt  {\frac{\mbox{d\,\,}}{\mbox{d}t}}



\def\bx{{\mathbf x}}

\def\XX{\mathbb X}


\usepackage{mathrsfs}
\usepackage{amsfonts}
\usepackage{amsfonts}\usepackage{mathrsfs}\usepackage{mathrsfs}\usepackage{graphicx}\usepackage{enumerate}\usepackage{epsfig}
\usepackage{subfigure}\usepackage{amsfonts}\usepackage{amscd}\usepackage{amsthm}\usepackage{bbm}
\usepackage{extpfeil}
\usepackage{colortbl}\usepackage{float}

\newcommand{\bbr}{\mathbb R}

\newcommand{\bbt}{\mathbb T}

\newcommand{\be}{\begin{equation}}
\newcommand{\ee}{\end{equation}}
\newcommand{\kphi}{\phi}
\newcommand{\kPhi}{\Phi}
\newcommand{\Rad}{R} 
\newcommand{\rc}{r} 
\newcommand{\Sp}{{\mathcal S}} 
\newcommand{\Spt}{{\Sp(t)}} 
\newcommand{\ind}{n} 
\newcommand{\JR}{|\delta \bu(t)|_{2,r}^2}


\begin{document}

\title[Flocking with short-range interactions]{Flocking with short-range interactions}

\author{Javier Morales}
\address{\newline
Center for Scientific Computation and Mathematical Modeling (CSCAMM)\newline
University of Maryland, College Park MD 20742}
\email{javierm1@cscamm.umd.edu}

\author{Jan Peszek}
\address{\newline
Center for Scientific Computation and Mathematical Modeling (CSCAMM)\newline
University of Maryland, College Park MD 20742}
\email{jpeszek@cscamm.umd.edu}

\author{Eitan Tadmor}
\address{\newline
Department of Mathematics\newline
Center for Scientific Computation and Mathematical Modeling (CSCAMM)\newline
 and\newline
  Institute for Physical Sciences \& Technology (IPST)\newline
 University of Maryland, College Park MD 20742}
\email{tadmor@cscamm.umd.edu}

\date{\today}

\subjclass{92D25, 35Q35, 76N10}

\keywords{Alignment, Cucker-Smale, agent-based system, large-crowd hydrodynamics, interaction kernels, short-range, chain connectivity, flocking}

\thanks{\textbf{Acknowledgment.} Research was supported  by NSF grants 
	DMS16-13911, RNMS11-07444 (KI-Net) and ONR grant 	N00014-1812465.
JP was also supported by the Polish MNiSW grant Mobilno\' s\' c Plus no. 1617/MOB/V/2017/0.	
}


\begin{abstract} We study the large-time behavior of continuum alignment dynamics based on Cucker-Smale (CS)-type interactions which involve \emph{short-range}  kernels, that is, communication kernels with support much smaller than the diameter of the crowd.  We show that if the amplitude of the interactions is larger than a finite threshold, then unconditional hydrodynamic flocking follows. Since we do not impose any regularity nor do we   require  the kernels to be bounded, the result covers  both regular and singular interaction kernels.\newline
Moreover, we treat initial densities  in the general class of compactly supported measures which are required to have  positive  mass \emph{on average} (over balls at small enough scale), but otherwise vacuum is allowed at smaller scales. Consequently, our arguments of hydrodynamic flocking  apply, mutatis mutandis,  to  the agent-based CS model with  finitely many Dirac masses. 
In particular, discrete flocking threshold is shown to depend on the number of dense clusters  of communication but otherwise does not grow with the number of agents.
\end{abstract}

\maketitle
\setcounter{tocdepth}{1}
\vspace*{-1.0cm}
\tableofcontents

\section{Introduction and statement of main results}

Our starting point is the celebrated Cucker-Smale  (CS) agent-based model \cite{CS2007a,CS2007b} which governs the alignment dynamics of $N$ agents --- each identified by its (position, velocity) pair, $(\vx_i(t),\vv_i(t)) \in \Omega\times \bbr^d$,
\begin{equation}\label{cs}
\begin{cases}
\dot{\vx}_i =\vv_i,\\
\dot{\vv}_i =\displaystyle\frac{\kappa}{N}\displaystyle\sum_{j=1}^{N}\kphi(|\vx_i-\vx_j|)(\vv_j-\vv_i),
\end{cases}
\end{equation}
and is subject  to prescribed initial conditions $(\vx_i(0),\vv_i(0))=(\vx_{i0},\vv_{i0})$. Here we study the $d$-dimensional ambient space where $\Omega$ is either $\bbt^d$ or $\bbr^d$.

The large crowd dynamics, $N\to\infty$, is captured by the hydrodynamic description
\begin{equation}\label{system}\hspace{6em}
\begin{cases}
\partial_{t}\rho +\nabla_\bx\cdot (\rho \bu) =0,\\
\partial_{t}\bu+\bu\cdot\nabla_\bx \bu =\kappa{\displaystyle \int_{\Omega}\kphi(|\bx-\by|)\big(\bu(\by)-\bu(\bx)\big) \drho(\by),}
\end{cases}\hspace{1mm} 
\end{equation}
which governs  the density and velocity of the crowd, $(\rho(t,\bx),\bu(t,\bx))$ at $(t,\bx) \in (\bbr_+\times \Omega)$, subject to prescribed data $(\rho(0,\bx),\bu(0,\bx))=(\rho_0(\bx),\bu_0(\bx))$.

The main feature of both --- the agent-based model \eqref{cs} and its large crowd hydrodynamic description \eqref{system}, is the \emph{communication} or \emph{interaction kernel},  $\kphi:[0,\infty)\to[0,\infty)$, which controls pairwise interactions depending on the \emph{geometric distances} of agents at $(t,\bx)$ and $(t,\by)$. Here  $\kappa>0$ quantifies the  amplitude of the interaction  and we distinguish between two main classes of interactions, involving long-range and short-range kernels.

\subsection{Long-range interactions} The role of $\kphi$ in determining the large-time behavior  becomes evident once we consider the \emph{energy fluctuations} associated with \eqref{cs} or \eqref{system}. Let $\Sp(t)$ denote the support of $\rho(t,\cdot)$ and assume, without loss of generality, that $\rho_0$ has a unit mass, then conservation of mass implies
\be\label{eq:mass}
\int_{\Sp(t)} \drho(t,\cdot) \equiv 1, \qquad \Sp(t):=\text{supp}\, \rho(t,\cdot).
\ee  
The basic bookkeeping associated  with \eqref{system} quantifies  the decay rate of the $L^2$-energy \emph{fluctuations}\footnote{Here and below we use $\delta \cdot$ to denote \emph{fluctuations}, $\delta W\equiv (\delta W)(\bx,\by):=W(\bx)-W(\by)$ with the corresponding weighted norms taken on the product space $\Sp\times \Sp$,
e.g., 
$|\delta \bu|^2_2= \int |\bu(\bx)-\bu(\by)|^2 \drho(\bx)\drho(\by)$. Likewise, $(\delta \bv)_{ij}=\bv_i-\bv_j$ with $|\delta\bv|_\infty=\max_{i,j}|\bv_i-\bv_j|$ etc.}
\[
|\delta \bu(t)|_2^2 :=
\int_{\Sp(t)\times\Sp(t)}\hspace*{-0.5cm} |\bu(t,\bx)-\bu(t,\by)|^2\drho(\bx)\drho(\by),
\]
in terms of  the enstrophy,
\begin{align}\label{ens}
\ddt\frac{1}{2}|\delta \bu(t)|_2^2 = -\kappa\int_{\Sp(t)\times\Sp(t)}\hspace*{-0.5cm} \kphi(|\bx-\by|) |\bu(\bx)-\bu(\by)|^{2}\drho(\bx)\drho(\by).
\end{align}
The \emph{enstrophy} on the right  admits the obvious lower-bound
\[
\int_{\Sp(t)\times\Sp(t)}\kphi(|\bx-\by|) |\bu(\bx)-\bu(\by)|^{2}\drho(\bx)\drho(\by)
\geq \mathop{\min}_{h\leq \text{diam}(\Sp(t))} \hspace*{-0.2cm}\phi(h) \times |\delta \bu(t)|_2^2.
\]
Since the diameter of velocity field, $ \displaystyle\sup_{\bx,\by\in \Sp(t)}|\bu(t,\bx)-\bu(t,\by)|$, remains bounded, the support of $\rho$ cannot expand faster than 
\[
\text{diam}(\Sp(t)) \leq \text{diam}(\Sp_0)+|\delta \bu_0|_\infty\cdot t, \qquad |\delta \bu(t)|_\infty := \displaystyle\sup_{\bx,\by\in \Sp(t)}|\bu(t,\bx)-\bu(t,\by)|,
\]
and hence \eqref{ens} implies the unconditional \emph{flocking behavior} of \eqref{system}, in the sense that
\be\label{eq:flocking}
\int_0^\infty \phi(h)\dh=\infty  \ \ \leadsto \ \ |\delta \bu(t)|_2^2 \ \ \stackrel{t\rightarrow \infty}{\longrightarrow} \ 0,
\ee
holds for all initial data which give rise to smooth solutions. 
Thus, if $\kphi$ is a \emph{long-range} interaction kernel satisfying the `fat tail' condition \eqref{eq:flocking}, then `smooth solutions must flock', \cite{TT2014},
and the existence of smooth  solutions is known in $\Omega=\bbt^1$, \cite{DKRT2018, ST2017a, ST2017b, ST2017c}, in  $\Omega=\bbr^2$ \cite{HeT2017},  and with small data in $\Omega = \bbt^d$ and $\Omega = \bbr^d$ \cite{Sh2018, DMPW2018}. 

The situation is completely analogous to the agent-based  case supported on the discrete set $\Sp(t):=\{\vx_i(t)\}_{i=1}^N$, where  the decay of energy fluctuations is controlled by
\begin{align}\label{enidd}
\ddt|\delta \vv(t)|_2^2 \leq  -\kappa\cdot
 \hspace*{-0.3cm}\mathop{\min}_{h\leq \text{diam}(\Sp(t))} \hspace*{-0.2cm}\phi(h)\times  |\delta \vv(t)|_2^2, \qquad 
 |\delta \vv(t)|_2^2 :=\frac{1}{N^2} \displaystyle\sum_{i,j=1}^{N}|\vv_i(t)-\vv_j(t)|^2.
\end{align}
But since the diameter of velocities, $\displaystyle 
\mathop{\max}_{1\leq i,j\leq N}|\vv_i(t)-\vv_j(t)|$, remains bounded, the diameter of positions grows at most linearly in time
\[
\text{diam}(\Sp(t)) \leq \text{diam}(\Sp_0) + |\delta \vv(0)|_\infty\cdot t, 
\qquad |\delta \vv(t)|_\infty :=
\mathop{\max}_{1\leq i,j\leq N}|\vv_i(t)-\vv_j(t)|,
\]
 and we conclude that long-range communication kernels with fat-tail imply flocking,
 \cite{HT2008, HL2009, MT2014},
 \be\label{eq:dflocking}
\int^\infty \phi(h)\dh=\infty  \ \ \leadsto \ \ |\delta \bv(t)|_2^2 \ \ \stackrel{t\rightarrow \infty}{\longrightarrow} \ 0.
\ee

\subsection{Short-range interactions} From the perspective of practical applications, it is important to understand the large-time behavior of alignment dynamics driven by 
\emph{short range} kernels, specifically --- when $\text{diam}(\text{supp}\, \kphi)$ is strictly less than the support of the crowd,
$\text{diam}(\Sp(t))$.  The fascinating and non-trivial aspect is to understand when and how such \emph{short-range}  interactions lead to the emergence of large-scale patterns, which in our case are realized by the flocking behavior approaching the average\footnote{Recall that $\rho$ is a probability measure} $\overline{\bu}_0$,   
\[
\int_{\Omega} |\bu(t,\bx)-\overline{\bu}_0|^2 \drho(\bx)\longrightarrow 0, \qquad 
\overline{\bu}_0:=\int_{\Sp_0} \bu_0(\bx)\drho(\bx),
\]
and likewise $|\vv_i(t) -\overline{\vv}_0|\rightarrow 0$ in the discrete case.
The dynamics of such short-range interactions   lies outside the `fat-tale' regime  \eqref{eq:flocking},\eqref{eq:dflocking} and is the main focus of the present work.

We begin by  fixing an \emph{essential support} of $\kphi$ of size $\Rad$ where it is assumed to be bounded away from zero,   
\begin{align}\label{psi}
\kphi:(0,\infty)\to(0,\infty),\quad  \kphi(h)\chi_{{}_{[0,\Rad]}}(h) \geq 1 >0 \ \ \mbox{for}\ \ h\in [0,\Rad].
\end{align}
We emphasize that the only requirement made in \eqref{psi} is that the kernel has a finite-range, $\Rad$, of non-negligible interaction which is scaled here as $\kphi(h)\geq 1$, but otherwise, we make no reference to the behavior of $\kphi$ \emph{outside} $[0,\Rad]$. In particular, a smooth $\kphi$ may have a compact support, say,  ${\rm supp}(\kphi) = [0,2\Rad]$, or --- since we do not impose any regularity restrictions, $\kphi$ can be cut-off at $h=\Rad$. In either case, $\kphi$ is not restricted to maintain a \emph{direct} communication between every two agents in the crowd. We summarize by introducing the main spatial scales involved in the problem, namely
\begin{subequations}\label{eqs:Ds}
\begin{align}
& \mbox{The diameter of the initial `crowd' distribution}: \quad D_{\Sp_0}:=\sup_{\bx,\by \in \Sp_0}|\bx-\by|, \label{eq:DS}\\
& \mbox{The essential diameter of the communication}: \quad \ \  D_\kphi:=\sup\{\rc \ | \ \min_{h\leq \rc}\kphi(h)\geq 1\}. \label{eq:Dp}
\end{align}
\end{subequations}
We specifically address the case of short-range interactions $D_{\kphi} \ll D_{\Sp_0}$. Moreover, since we do not impose any boundedness of $\kphi$, \eqref{psi} includes both --- bounded  communication  kernels, \cite{CS2007a,CS2007b, HT2008, HL2009,CFTV2010, MT2014}, and singular ones \cite{Pe2014, Pe2015, PS2017, ST2017a,ST2017b,ST2017c,DKRT2018}.\newline
The flocking behavior in such cases hinges on the \emph{connectivity} of the communication throughout the crowd: in the discrete case, it is the \emph{graph connectivity} of the adjacency matrix $\{\phi(|\vx_i(t)-\vx_j(t)|)\}$, \cite[theorem 4.2]{MT2014}.  Here we show that in the continuum, it is the \emph{chain connectivity} at a communication  scale  that matters (see assumption \ref{ass:s0} below): if chain connectivity at the communication scale persists in time then the dynamics will converge to a flock. Our main results show that if the amplitude of alignment is strong enough, $\kappa>\kappa_0$, such chain connectivity persists and hence uniform flocking follows.
\begin{theorem}[{\bf Emergence of hydrodynamic flocking}]\label{flockcont}
Let $(\rho, \bu)$ be a strong solution of \eqref{system},\eqref{psi} in either $\bbt^d$ or $\bbr^d$ subject to compactly supported initial data $(\rho_0,\bu_0)\in (L^1(\Sp_0), C^1(\Sp_0))$. Assume  $\rho_0$ is normalized to have  a unit mass and it satisfies the following $($see assumption \ref{ass:s0} below$)$:

\smallskip\noindent
 (i) Its support is chain connected with $\ind(\rc)$ balls at scale $\rc$ for some $\rc \leq \frac{1}{6}D_\kphi$ $($see \eqref{cdef}$)$; 
 
 \smallskip\noindent
 (ii) It has a minimal average mass $m>0$ at  scale $\frac{\rc}{100}$ $($see \eqref{mdef}$)$.
 
\smallskip \noindent
Then, there exist  constants $\displaystyle \eta \lesssim \frac{m^2}{n}$  and a threshold $\kappa_0 =\kappa_0(n,m,r)$,
such that if the amplitude of the alignment $\kappa \geq \kappa_0$,  then there is exponential convergence towards  flocking,
\begin{align}\label{fc0}
|\bu(t,\cdot) -\overline{\bu}_0|_{\infty}\leq |\delta \bu_{0}|_{\infty}e^{ -\kappa  m t} + \frac{2}{m}|\delta \bu_{0}|_{2}e^{ -\kappa \eta t}.
\end{align}
\end{theorem}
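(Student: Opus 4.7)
My plan is to close a bootstrap argument that links three ingredients: (i) persistence of the chain-connectivity and minimal-mass conditions of $\Spt$ along the flow; (ii) a Poincar\'e-type lower bound $E(t) \gtrsim \eta|\delta\bu(t)|_2^2$ on the enstrophy; and (iii) a two-rate decay of $|\delta \bu(\cdot,t)|_\infty$ obtained by following the alignment equation along characteristics. The amplitude hypothesis $\kappa \ge \kappa_0$ is precisely what allows the bootstrap to close.

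First, assume the chain-connectivity and minimal-mass conditions persist on some interval $[0,T]$. To derive the Poincar\'e bound, fix $(\bx,\by)\in \Spt\times\Spt$, and let $B_{k_1},\ldots,B_{k_K}$ with $K\le \ind$ be a chain of balls (from assumption \ref{ass:s0}) connecting them with consecutive pairs within distance $\Rad$, with $\bx\in B_{k_1}$ and $\by\in B_{k_K}$. Picking $\bp_j\in B_{k_j}$ and telescoping, Cauchy--Schwarz yields
\[
|\bu(\bx)-\bu(\by)|^2 \le (\ind+1)\Big\{ |\bu(\bx)-\bu(\bp_1)|^2 + \sum_{j=1}^{K-1} |\bu(\bp_j)-\bu(\bp_{j+1})|^2 + |\bu(\bp_K)-\bu(\by)|^2 \Big\}.
\]
Every pair on the right lies within the essential range of $\kphi$, where $\kphi \ge 1$. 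Averaging the $\bp_j$'s against the normalized measures $\drho/m(B_{k_j})$ (with $m(B_{k_j}) \ge m$) and integrating against $\drho(\bx)\drho(\by)$ gives
\[
|\delta\bu(t)|_2^2 \lesssim \frac{\ind^2}{m^2}\, E(t), \qquad E(t) := \int_{\Spt\times\Spt} \kphi(|\bx-\by|)|\bu(\bx)-\bu(\by)|^2 \drho(\bx)\drho(\by).
\]
Combined with the energy identity \eqref{ens}, Gr\"onwall yields $|\delta\bu(t)|_2 \le |\delta\bu_0|_2\, e^{-\kappa\eta t}$ with $\eta\lesssim m^2/\ind$.

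Next, to upgrade to an $L^\infty$ decay, let $X(t;\bx)$ denote the Lagrangian flow and set $w(t,\bx) := \bu(t,X(t;\bx)) - \overline{\bu}_0$. Along characteristics the momentum equation becomes the scalar linear ODE
\[
\dot w = -\kappa A(t)\, w + \kappa F(t), \quad A(t) = \int \kphi(|X-\by|)\drho(\by), \quad F(t) = \int \kphi(|X-\by|)(\bu(\by)-\overline{\bu}_0)\drho(\by).
\]
The minimal-mass condition forces $A(t)\ge m$, since there is $\rho$-mass at least $m$ within the essential range of $\kphi$ around $X(t;\bx)$ on which $\kphi\ge 1$. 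Cauchy--Schwarz together with the Poincar\'e bound controls $|F(t)|\lesssim |\delta\bu(t)|_2\le |\delta\bu_0|_2\, e^{-\kappa\eta t}$. Applying Gr\"onwall to $\dot w + \kappa m\, w \le \kappa|F|$ (noting $\eta\le m^2/\ind \le m$, so the two rates are distinct) produces the two-rate bound \eqref{fc0}.

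The main obstacle, and the reason a large $\kappa$ is required, lies in closing the bootstrap: all the above presupposes that the chain connectivity (at a slightly relaxed scale $\Rad/6$) and the local minimal-mass condition are preserved for all $t\ge 0$. Since the characteristic flow shifts points at rate at most $|\delta\bu(t)|_\infty$, inserting the conjectural bound \eqref{fc0} yields
\[
\int_0^\infty |\delta\bu(s)|_\infty\ds \le \frac{|\delta\bu_0|_\infty}{\kappa m} + \frac{2|\delta\bu_0|_2}{m\kappa\eta}.
\]
Choosing $\kappa_0(\ind,m,\rc)$ large enough forces the right-hand side below the geometric buffer $\Rad/6-\rc$; a standard continuity argument then certifies that the hypotheses persist on $[0,\infty)$, thereby validating the estimates and completing the proof.
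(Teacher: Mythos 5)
Your overall architecture matches the paper exactly: (a) a Poincar\'e-type bound via the connecting chain gives exponential decay of $|\delta\bu|_2$ at rate $\kappa\eta$; (b) that decay is transferred to $|\delta\bu|_\infty$; (c) a bootstrap on the maximal expansion of the flow map closes once $\kappa$ exceeds $\kappa_0$. Step (a) and step (c) are essentially the paper's Propositions \ref{endec} and \ref{tdtp}. But there is a genuine gap in step (b), and it is precisely where the theorem's generality bites.

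You pass to Lagrangian coordinates and write $\dot w=-\kappa A(t)\,w+\kappa F(t)$ with $A(t)=\int\kphi(|X-\by|)\drho(\by)$ and $F(t)=\int\kphi(|X-\by|)(\bu(\by)-\overline{\bu}_0)\drho(\by)$, then claim $|F(t)|\lesssim|\delta\bu(t)|_2$. This requires an upper bound on $\kphi$: with $\|\kphi\|_\infty=M$ you get $|F|\lesssim M|\delta\bu|_2$, and the constant in \eqref{fc0} would degrade with $M$. The theorem, however, explicitly allows \emph{singular} (unbounded) kernels---see the discussion after \eqref{eqs:Ds} and the references \cite{Pe2014,Pe2015,PS2017,ST2017a,ST2017b,ST2017c,DKRT2018}. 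For such $\kphi$, $A(t)$ and $F(t)$ are individually ill-defined (or at least uncontrolled, depending on concentration of $\rho$ near $X$), even though the original right-hand side $\int\kphi(|X-\by|)(\bu(\by)-\bu(X))\drho(\by)$ is finite for a strong solution because the factor $\bu(\by)-\bu(X)$ vanishes at $\by=X$. Subtracting the \emph{global} constant $\overline{\bu}_0$ destroys that cancellation. The paper's Proposition \ref{veldec} avoids this entirely by the extremal-point projection trick of \cite[Thm.~2.3]{MT2014}, \cite{HeT2017}: project on a unit vector $\bw$, work at points $\bx_\pm$ realizing the max/min of $\langle\bu,\bw\rangle$, observe the integrand then has a fixed sign, discard the far-field contribution, and only then invoke $\kphi\geq 1$ on the near ball $F_\pm^t$---a \emph{lower} bound on $\kphi$ only. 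Your ODE scheme needs to be replaced by (or reconciled with) that sign argument; as written it would only prove the theorem for bounded kernels, and with a $\kphi$-dependent constant that is not present in \eqref{fc0}.

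A smaller, non-fatal deviation: your Poincar\'e bound reads $|\delta\bu|_2^2\lesssim \frac{n^2}{m^2}E(t)$, i.e.\ $\eta\lesssim m^2/n^2$, one power of $n$ worse than the statement $\eta\lesssim m^2/n$. The missing saving is the paper's observation (Remark \ref{rem:length}) that a \emph{minimal} chain covers $\Spt$ only a fixed (chain-length-independent) number of times, so the sum $\sum_\alpha\int_{F_\alpha\times F_{\alpha+1}}|\delta\bu|^2$ is $\lesssim E(t)$ rather than $\lesssim n\,E(t)$. This still yields flocking, only with a worse threshold $\kappa_0$, so the conclusion survives---but it does not reproduce the claimed constant $\eta\lesssim m^2/n$.
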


\begin{remark}[{\bf On the size of the threshold} $\kappa_0$]
Noting that $nm \lesssim \rho(\Sp_0)=1$, it follows that the threshold $\kappa_0$  specified in \eqref{kll} below, does not exceed
\begin{subequations}\label{eqs:fc}
\begin{equation}\label{eq:fc1}
\kappa_0 \lesssim \max\{|\delta \bu_0|_\infty,|\delta \bu_0|_2\}\frac{n}{m^3 \rc } = C_0\times \frac{1}{m^4 \rc}, \qquad C_0=\max\{|\delta \bu_0|_\infty,|\delta \bu_0|_2\}.
\end{equation}
In the generic case that the initial support $\Sp_0$ is \emph{convex}, the length of a connecting chain at scale $\rc$ does not exceed the ratio $\displaystyle n \lesssim\frac{D_{\Sp_0}}{\rc}$ and  we end up with a slightly improved bound
\begin{equation}\label{eq:fc2}
\kappa_0   \lesssim C_0\times \frac{D_{\Sp_0}}{m^3 \rc^2}, \qquad r\lesssim \frac{1}{6}D_\kphi.
\end{equation}
\end{subequations}
\end{remark}

\begin{remark}[{\bf Local vs global communication}]
We note that if the communication kernel $\kphi$ has a finite support yet it maintains global communication at $t=0$, that is,  if $D_\kphi>D_{\Sp_0}$ then this  yields \emph{conditional} flocking provided  $\kappa$ is assumed to be large enough; this follows from tracing the decreasing Ha-Liu functional, \cite{HL2009}, 
\be\label{eq:HL}
|\delta \bu(t)|_\infty + \kappa \int_{D_{\Sp_0}}^{\text{diam}(\Sp(t))}\hspace*{-0.4cm}\phi(h)\dh
\leq |\delta \bu(0)|_\infty
\ee
 The focus of theorems \ref{flockcont} and \ref{flockpart}, however,  is in short-range communication where 
 \be\label{eq:short}
D_{\kphi} < D_{\Sp_0}.
 \ee 
 The necessity of  large enough $\kappa$ in such local cases is clear: otherwise,  it is easy to produce an example --- we do it in section \ref{remarks} below, where agents  can leave their sensitivity regions in a finite time and then move independently with constant velocities. Observe that a straightforward application of \eqref{eq:HL} to the re-scaled system \eqref{flockcont}, $(t, \bu) \mapsto (\kappa t, \bu/\kappa)$, would fall short to imply that $\text{diam}(\Sp(t))$ remains bounded  (and hence flocking) since $\kphi(h)$ vanishes for $h> D_{\Sp_0}$ in the case of short-range communication \eqref{eq:short}.
\end{remark}

\begin{remark}[{\bf Beyond almost aligned dynamics}]
The arguments behind theorem \ref{flockcont}  do \emph{not} require any smoothness beyond the existence of a classical flow map, $\{\XX(t,\bx)\, | \, \ddt\XX(t,\bx)=\bu(t,\XX(t,\bx))\}$ and $\rho_0$ being a non-negative measure, but otherwise, the  resulting  flocking estimate \eqref{fc0} does  not depend on  higher $L^{p}$ bound of $\rho_0$ nor on the higher regularity of $\bu$. Our result, therefore,  cannot be derived  by rescaling  the smoothness  arguments   developed for the almost aligned  regimes in \cite{Sh2018} and \cite{DMPW2018}. 
\end{remark}

\subsection{Alignment in discrete dynamics} The measure $\rho_0$ is only required to have a positive  mass \emph{on average} (over balls at scale $\geq \frac{\rc}{100}$ --- consult \eqref{mdef} below) but otherwise vacuum is allowed at smaller scales. Consequently, the arguments of theorem  \ref{flockcont}   apply, mutatis mutandis,  to deal with $\rho_0$ consisting of finitely many Dirac masses which are pushed forward along particle paths. This brings us to
our second main result, which addresses the flocking of   the discrete setup  \eqref{cs} with short-range communication.

\begin{theorem}[{\bf Flocking of CS model}]\label{flockpart}
Let $(\vx(t),\vv(t)), \ \vx=(\vx_1,\ldots, \vx_N),\vv=(\vv_1,\ldots,\vv_N)$, be a classical solution of  the CS agent-based system \eqref{cs},\eqref{psi} subject to the initial data $(\vx_0,\vv_0)$.  Assume that $\vx_0$ is chain connected at scale $\rc, \ \rc\lesssim \frac{1}{6}D_\kphi$.
Then, there exists  a constant $\kappa_0$
\begin{align}\label{eq:dthresh}
\kappa_0:=\frac{16N^4}{\rc}|\delta \vv_{0}|_{2}, \quad \rc\lesssim \frac{1}{6}D_\kphi,
\end{align}
such that for $\kappa \gtrsim \kappa_0$ the velocity fluctuation decays exponentially towards a flock, {\it i.e.}, we have 
\begin{align}\label{fc1}
|\vv(t,\cdot) -\overline{\vv}_0|_{\infty}\leq |\delta \vv_{0}|_{\infty}\,e^{ -\frac{\kappa }{N}  t} + 2N\cdot |\delta \vv_{0}|_{2}\,e^{ -\frac{\kappa }{N^3}  t}.
\end{align}
\end{theorem}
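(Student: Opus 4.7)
The plan is to view the discrete CS system \eqref{cs} as a measure-valued instance of the hydrodynamic system \eqref{system} and to deduce Theorem~\ref{flockpart} directly from Theorem~\ref{flockcont}. Set
\[
\rho(t,\cdot) := \frac{1}{N}\sum_{i=1}^{N}\delta_{\vx_i(t)}, \qquad \bu(t,\vx_i(t)) := \vv_i(t),
\]
with $\bu$ extended off the atoms in any continuous way (the extension will never be probed by the proof, since all integrals are against $\rho$). Pushing forward $\rho_0=\frac{1}{N}\sum_i\delta_{\vx_{i0}}$ along the classical particle trajectories solves the continuity equation, and evaluating the alignment integral in \eqref{system} against this atomic measure reproduces the CS right-hand side in \eqref{cs} exactly. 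Thus $(\rho,\bu)$ fits the hypotheses of Theorem~\ref{flockcont}, which (as emphasized in the remark above) requires only a classical flow map and a compactly supported non-negative measure $\rho_0$.

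Next I would verify the two geometric/mass hypotheses of Theorem~\ref{flockcont}. Chain connectivity of $\Sp_0=\{\vx_{10},\ldots,\vx_{N0}\}$ at scale $\rc\lesssim \tfrac{1}{6}D_\kphi$ is given, with a chain length $\ind\leq N$. For the minimal average mass at scale $\rc/100$, every relevant ball along the chain is centered at some $\vx_{i0}$ and therefore carries mass at least $1/N$ from that Dirac, so one may take $m=1/N$. Feeding
\[
\ind\leq N,\qquad m\geq \tfrac{1}{N},\qquad \CT\lesssim \tfrac{m^2}{\ind}\lesssim \tfrac{1}{N^3}
\]
into the continuum decay estimate \eqref{fc0} reproduces precisely the exponents $\kappa/N$ and $\kappa/N^3$ appearing in \eqref{fc1}, together with the $2N$ prefactor coming from $2/m$.

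For the amplitude threshold I would substitute the same parameters into \eqref{eq:fc1}, obtaining
\[
\kappa_0\lesssim \frac{\ind}{m^3\,\rc}\,\max\{|\delta\vv_0|_\infty,|\delta\vv_0|_2\}\lesssim \frac{N^4}{\rc}\,\max\{|\delta\vv_0|_\infty,|\delta\vv_0|_2\}.
\]
In the discrete setting one has $|\delta\vv_0|_\infty\leq \tfrac{N}{\sqrt{2}}|\delta\vv_0|_2$ by comparison of the maximum of finitely many values with the normalized $\ell^2$ norm $|\delta\vv_0|_2^2=\tfrac{1}{N^2}\sum_{i,j}|\vv_{i0}-\vv_{j0}|^2$. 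Absorbing this factor into the universal constant recovers the stated threshold \eqref{eq:dthresh}; more cleanly, rerunning the proof of Theorem~\ref{flockcont} with the $\ell^2$ fluctuation in place of the $L^\infty$ fluctuation throughout tightens the constant to the prefactor $16$ appearing in \eqref{eq:dthresh}.

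The main obstacle is not computational but conceptual: one must be certain that the quantitative machinery behind Theorem~\ref{flockcont} genuinely tolerates a purely atomic $\rho_0$. The key ingredients — propagation of chain connectivity along the flow, the lower bound on average mass over small balls, and the enstrophy identity \eqref{ens} — must be stated in a form that never invokes absolute continuity or higher $L^p$ bounds of $\rho$; they should depend only on mass pushed forward by the Lipschitz-in-time flow map $\XX$. This robustness is explicitly flagged in the remark following Theorem~\ref{flockcont} and in the discussion around \eqref{mdef}. Once it is confirmed, Theorem~\ref{flockpart} reduces to the parameter substitution sketched above.
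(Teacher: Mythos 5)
Your proposal is the paper's own argument: the authors likewise realize \eqref{cs} as the atomic solution $\rho_0=\frac{1}{N}\sum_j\delta_{\vx_j(0)}$, $\bu(t,\vx_j(t))=\vv_j(t)$ of \eqref{system}, invoke Theorem~\ref{flockcont} with $n=N$, $m=\nicefrac{1}{N}$ (hence $\eta\lesssim N^{-3}$), and use $|\delta\vv_0|_\infty < N|\delta\vv_0|_2$ to pass from \eqref{fc0} and \eqref{kll} to \eqref{fc1} and \eqref{eq:dthresh}. One small refinement worth noting: rather than routing through the coarser $\max$-bound \eqref{eq:fc1} and then ``absorbing'' the factor $N$ from $|\delta\vv_0|_\infty\leq\tfrac{N}{\sqrt2}|\delta\vv_0|_2$ (which, taken at face value, would inflate the threshold to order $N^5$), substitute directly into \eqref{kll}, where the $|\delta\vv_0|_\infty$ term carries the small prefactor $\tfrac{1}{2\rc m}\sim N$ and is therefore sub-dominant to the $\tfrac{1}{\rc m\,\eta}\sim N^4$ coefficient of the $|\delta\vv_0|_2$ term even after the conversion.
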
 

\begin{remark}[{\bf Dynamics in discrete clusters}]\label{rem:moderate}
The discrete threshold \eqref{eq:dthresh} follows from theorem \ref{flockcont} with 
$n=N$ Dirac masses, each scaled with mass $m=\nicefrac{1}{N}$.
But we observe that the mass of a ball with radius $r$  centered at one of the $\vx_j(0)$'s may in fact be much larger than $\nicefrac{1}{N}$, if that ball encloses a crowd of Dirac masses. In this case the threshold bound of order $\kappa_0 \sim {n}{m^{-3}}$ improves with growing $m$. In particular, consider initial configurations where the $N$ agents can be partitioned  into clusters, $\{{\mathcal N}_\alpha\}_{\alpha}$ of size $n_\alpha$, which partition $\{1,2, \ldots, N\}$, such that the agents in each initial cluster are in direct communication
\[
{\mathcal C}_\alpha:=\{x_{i}(0)\ | \ i\in{\mathcal N}_\alpha\}, \qquad n_\alpha=\#{\mathcal N}_\alpha, \quad  \sum_{\alpha} n_\alpha=N.
\]
 Thus,  the crowd in cluster ${\mathcal C}_\alpha$ is inside a ball of one of its  members, say ${\mathcal C}_\alpha \subset B_{\rc}(x_{i_\alpha}(0))$ with $\rc \lesssim \frac{1}{6}D_\kphi$. The mass inside a fixed fraction of each such ball is of order $\displaystyle m \gtrsim \frac{n_\alpha}{N}$, and in particular,
 \[
 m\gtrsim \frac{n_-}{N}, \qquad n_-:=\min_\alpha n_\alpha.
 \]
The number of such clusters does not exceed  $n\leq \nicefrac{N}{n_-}$ and we end up with a threshold of order $\kappa_0 \gtrsim (\nicefrac{N}{n_-})^4$. We conclude the following.
 \end{remark}
 \begin{corollary}[{\bf Finitely many clusters}]\label{cor:clusters} Let $(\vx(t),\vv(t))$, be a classical solution of  the CS agent-based system \eqref{cs},\eqref{psi} subject to the initial data $(\vx_0,\vv_0)$ which can be partitioned  into clusters ${\mathcal C}_\alpha$ of size $n_\alpha$ with diameter $\lesssim 2D_\kphi$,
 \[
 \bx(0) \subset \bigcup_{\alpha} {\mathcal C}_\alpha, \quad \text{diam}({\mathcal C}_\alpha)  \lesssim 2D_\kphi, \quad n_\alpha:=\#\{j \ | \ \bx_j(0)\in {\mathcal C}_\alpha\}.
 \]
Assume  these clusters $\{{\mathcal C}_\alpha\}$ are bounded away from vacuum in the sense that $\nicefrac{n_\alpha}{N}\geq \theta >0$ (and thus, in particular, there are at most $1/\theta$ of them). Then  the flocking threshold depends solely on  the finite number of clusters, $\kappa_0 \lesssim \theta^{-4}$,   but otherwise  it does \emph{not} grow with $N$.
\end{corollary}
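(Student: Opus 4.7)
The idea is to view the agent dynamics as a measure-valued solution of the hydrodynamic system \eqref{system} driven by the empirical measure $\rho_0=\frac{1}{N}\sum_{i=1}^{N}\delta_{\bx_i(0)}$, and then invoke Theorem \ref{flockcont} while tracking how the cluster hypothesis controls the two parameters $\ind$ (chain length) and $m$ (minimal average mass) that enter the threshold \eqref{eq:fc1}. The plan follows the heuristic already laid out in Remark \ref{rem:moderate}: the point is that the bound \eqref{eq:fc1} grows with $\ind$ but, crucially, decays like $m^{-4}$, so packaging $N$ particles into a fixed number of macroscopic clusters leaves the threshold $N$-independent.

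Each cluster ${\mathcal C}_\alpha$ is, by hypothesis, enclosed in a ball $B_{\rc}(\bx_{i_\alpha}(0))$ of radius $\rc\lesssim \frac{1}{6}D_\kphi$ centered at one of its own representatives, so that these balls together cover $\Sp_0$. Since each cluster carries relative mass $\nicefrac{n_\alpha}{N}\geq \theta$ and $\sum_\alpha n_\alpha=N$, the total number of clusters is at most $\nicefrac{1}{\theta}$, independent of $N$. Ordering them along a connecting chain inside $\Sp_0$ then yields $\ind(\rc)\lesssim \nicefrac{1}{\theta}$, which settles the chain-connectivity hypothesis (i) of Theorem \ref{flockcont}. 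For hypothesis (ii), the empirical mass inside each cluster ball is $\nicefrac{n_\alpha}{N}\geq \theta$, so after averaging on the finer scale $\rc/100$ the minimal average mass is still bounded below by $m\gtrsim \theta$. Substituting $\ind\lesssim \theta^{-1}$ and $m\gtrsim \theta$ into \eqref{eq:fc1} gives
\[
\kappa_0 \lesssim C_0\cdot \frac{\ind}{m^{3}\rc} \lesssim \frac{C_0}{\theta^{4}\,\rc},
\]
and since both $C_0=\max\{|\delta\bu_0|_\infty,|\delta\bu_0|_2\}$ and $\rc\sim D_\kphi$ are $N$-independent, we obtain $\kappa_0\lesssim \theta^{-4}$ as claimed. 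The exponential decay rates in \eqref{fc1} inherit the same $N$-free scaling via $\kappa m\gtrsim \kappa\theta$ and $\kappa\eta\gtrsim \kappa\theta^{3}$.

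The main obstacle is conceptual rather than computational: Theorem \ref{flockcont} is phrased in terms of a minimal \emph{average} mass at the small scale $\rc/100$, yet a cluster of Diracs is concentrated on a set of Lebesgue measure zero. One therefore needs to verify that the proof of Theorem \ref{flockcont} truly uses only the coarser information that every chain-link ball carries total mass $\gtrsim m$, rather than any pointwise lower bound on $\rho$. Once this reduction is in place --- as already flagged in Remark \ref{rem:moderate} through the identification $m\sim \nicefrac{n_-}{N}$ --- the remainder of the argument is the bookkeeping carried out above.
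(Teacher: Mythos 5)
Your argument reproduces, essentially verbatim, the computation the authors carry out in Remark~\ref{rem:moderate}: identify $m\gtrsim\theta$, $\ind\lesssim 1/\theta$, and substitute into the threshold \eqref{eq:fc1} to obtain $\kappa_0\lesssim\theta^{-4}$; so the route is the same as the paper's. One small inaccuracy is worth flagging, though. The sentence ``after averaging on the finer scale $\rc/100$ the minimal average mass is still bounded below by $m\gtrsim\theta$'' is not correct as a statement about hypothesis~(ii) of Assumption~\ref{ass:s0}: a ball of radius $\rc/100$ centered at a single agent deep inside a large cluster may capture only $O(1)$ of the $n_\alpha$ particles, so the fine-scale infimum in \eqref{mdef} can still be as small as $1/N$. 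What actually makes the argument work --- and what the paper also uses only implicitly --- is that the proof of Theorem~\ref{flockcont} (Propositions~\ref{endec}--\ref{tdtp}) never uses the $\rc/100$-scale mass directly; it only ever lower-bounds the mass of the chain balls $B_\rc(\bc_\alpha)$ themselves, and for the cluster chain those masses are $\geq\theta$ by construction. The $\rc/100$ scale appears solely in Remark~\ref{rem:length} to control the chain length $\ind$, which in the present setting you obtain directly as $\ind\lesssim 1/\theta$ from the cluster count, so that lemma is not needed. With that clarification, the rest of your bookkeeping matches the paper and yields the stated $N$-independent threshold.
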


The  proof of theorems \ref{flockcont} (and likewise \ref{flockpart})  is argued  as follows. Let $u_+(t)$ and $u_-(t)$ denote (scalar) components of the velocity field $\bu(t)$ which achieve maximal, and respectively, minimal values at time $t$. Then $u_\pm(t)$ will be attracted  to align with their respective local averages, say $\bar{u}_\pm(t)$, localized to the corresponding $D_\kphi$-neighborhoods of $u_\pm(t)$. Thus, the diameter $D_{u(t)}:=|u_+(t)-u_-(t)|$ is expected to decay. However, the decay rate of $D_{u(t)}$  need \emph{not} be strictly negative: as noted in \cite[theorem 3.2]{ST2018}, there is a possibility of near constancy (flattening) of $u$ in the neighborhoods of the extrema $u_\pm(t)$. In this case, one needs to extend the first two neighborhoods  into a second set of neighborhoods of averages and so, thus creating a \emph{global} chain of balls of local neighborhoods, \emph{connecting} $\bar{u}_+(t)$ with $\bar{u}_-(t)$. This is outlined in section \ref{prelim}. In proposition \ref{endec}  we show that the \emph{global} variation over such chains is  controlled by   exponential decay of the  fluctuations of the velocity $|\delta \bu(t)|_2$, which in turn implies, proposition \ref{veldec},  exponential decay of the uniform  fluctuations $|\delta \bu(t)|_\infty$. The key question is whether \emph{chain connectivity} at the   local communication scale $D_\kphi$ persists over time, to facilitate  the above argument.
To this end we show, in proposition \ref{tdtp}, that  large enough $\kappa$  guarantees that the chain property at scale $\sim D_\kphi$ persists in time, and hence short-time process can be prolonged indefinitely thanks to the previously obtained local decay of $|\delta \bu(t)|_\infty$.
 
\section{Staying uniformly away from vacuum}
 As noted above, theorem \ref{flockcont} does \emph{not} rule out vacuum in small balls (with scale smaller than $\nicefrac{\rc}{100}$); in fact, all that we evolve are averaged masses.
It is known, however, that such \emph{uniform} bound  is intimatley related to the  existence of strong (smooth) solution; see e.g.,\cite{Ta2017,ST2017b}, and earlier results for non-uniform bounds in \cite{DKRT2018, ST2017a}. Here we show that our arguments can be adapted to obtain such a uniform  bound of the one-dimensional density $\rho$ away from $0$ in $\Omega=\bbt$.   We extend the techniques used in \cite{ST2017b} which cannot be directly used with short range  interaction kernels, proving  in section \ref{ubond} the following.
\begin{corollary}\label{boundro}
Let $(\rho,u)$ be a strong solution of the 1D system \eqref{system} on the torus $\bbt$ subject to non-vacuous data $(\rho_0\in L^1(\bbt), u_0\in C^1(\bbt)), \ \rho_0>0$, and satisfying the  assumption \ref{ass:s0}.  Then, there exists a positive constant $C_0$ such that
\begin{align}\label{eq:rmin}
\rho(t,\bx)\geq C_0>0 \qquad\mbox{for all}\ x\in{\mathbb T},\ t\geq 0.
\end{align}
\end{corollary}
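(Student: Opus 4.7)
The plan is to adapt the entropy method of \cite{ST2017b} to the short-range setting, with the missing quantitative input supplied by the persistence-of-chain-connectivity argument already needed for theorem \ref{flockcont}.

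Following \cite{ST2017b}, I would introduce the auxiliary quantity
\[
e(t,x) := \partial_x u(t,x) + \kappa(\phi\ast\rho)(t,x).
\]
Differentiating the 1D momentum equation in $x$ and using the identity $(\partial_t + u\partial_x)(\phi\ast\rho) = u(\phi\ast\rho)_x - (\phi\ast(\rho u))_x$ obtained from the continuity equation, the convolution terms telescope to give
\[
(\partial_t + u\partial_x)e = -u_x\cdot e.
\]
Combined with continuity $(\partial_t + u\partial_x)\rho = -\rho u_x$, this forces the ratio $e/\rho$ to be conserved along the characteristic flow $X(t,x_0)$. Setting $c_0(x_0) := e_0(x_0)/\rho_0(x_0)$, which is well-defined and bounded because $\rho_0>0$ is continuous on the compact torus, and substituting $u_x = c_0\rho - \kappa(\phi\ast\rho)$ back into continuity yields the scalar ODE
\[
\dot\rho = -c_0(x_0)\,\rho^2 + \kappa\,\rho\,(\phi\ast\rho)
\]
along each characteristic, with $\rho$ and $\phi\ast\rho$ evaluated at $(t,X(t,x_0))$.

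Provided I can secure a time-uniform lower bound $(\phi\ast\rho)(t,X(t,x_0)) \geq m_\ast > 0$, the right-hand side dominates the logistic equation $\dot y = -c_0 y^2 + \kappa m_\ast y$; starting from $\rho_0(x_0) > 0$, ODE comparison keeps $\rho(t,X(t,x_0))$ above a positive constant depending only on $\|c_0\|_\infty$, $\min \rho_0$, and $\kappa m_\ast$, which is the desired bound. Because $\phi \geq 1$ on $[0,R]$ and $r/100 \leq R$, the pointwise estimate $(\phi\ast\rho)(x) \geq \rho(B_{r/100}(x))$ holds, so the required lower bound reduces to propagation of the minimum-mass condition (ii) of assumption \ref{ass:s0}.

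This propagation is the main obstacle: unlike the global-range regime of \cite{ST2017b}, where $\phi\ast\rho \geq \min\phi\cdot\int\rho = \mathrm{const}$ is automatic from conservation of mass, short-range $\phi$ forces one to track the mass of scale-$r/100$ balls moving with the flow. This is precisely what proposition \ref{tdtp} delivers on short time-scales; my plan is to bootstrap it to all times using the exponential decay \eqref{fc0}. Since $|\delta\bu(t)|_\infty$ decays exponentially, $\int_0^\infty |\delta\bu(s)|_\infty\,\dd s < \infty$, and every small ball is distorted under the flow by at most a finite, $\kappa$-controlled amount. Together with the chain connectivity preserved by proposition \ref{tdtp}, this keeps the minimum average mass $m$ uniformly bounded below in time, closes the logistic comparison, and produces the desired constant $C_0 > 0$ with $\rho(t,x) \geq C_0$ for all $x \in \bbt$ and $t \geq 0$.
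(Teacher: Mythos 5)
Your overall strategy is the same as the paper's: introduce the Shvydkoy--Tadmor transported quantity (their $q$, your $e/\rho$, which on $\mathbb{T}$ differ only by the constant $\kappa\int_{\mathbb T}\phi$), substitute back into the continuity equation to get a Riccati-type ODE along characteristics, and supply the missing lower bound on the alignment term via chain connectivity. The main difference is that the paper evaluates the Riccati inequality only at the spatial minimum $\rho_-(t)$, where $\rho(y)-\rho_-\geq 0$ makes the local averaging term manifestly nonnegative; you instead work along every characteristic, which is also fine but requires the lower bound on $\phi\ast\rho$ at every point of $\Sp(t)$ rather than just at $x_-(t)$.

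The one genuine gap is in how you propose to secure $(\phi\ast\rho)(t,X(t,x_0))\geq m_\ast$. You reduce it to $\rho\big(t,B_{r/100}(X(t,x_0))\big)\geq m_\ast$, i.e.\ a \emph{fixed-radius} ball at time $t$. But assumption \ref{ass:s0}(ii) bounds the mass of balls centered in $\Sp_0$ at $t=0$; it says nothing directly about time-$t$ balls, and proposition \ref{tdtp} controls distortion of \emph{flow-pushed} sets, not mass of Eulerian balls. The fix, which is exactly what the paper does, is to use the forward images $F_\alpha^t=\XX^t(B_r(\bc_\alpha))$: their mass $\rho(F_\alpha^t)=\rho_0(B_r(\bc_\alpha))\geq m$ is conserved by the flow, and proposition \ref{tdtp} (with $\kappa\geq\kappa_0$) keeps $\diam(F_\alpha^t)\leq 3r<\tfrac12 D_\phi$, so for any $x\in\Sp(t)$ one picks a chain element $F_\alpha^t\ni x$ and gets $(\phi\ast\rho)(x)\geq\int_{F_\alpha^t}\phi(|x-y|)\drho(y)\geq m$. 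Replacing your Eulerian ball with this Lagrangian chain element closes the argument; the rest of your plan (the logistic comparison against $\dot y=-\|c_0\|_\infty y^2+\kappa m y$) then goes through and yields precisely the paper's constant $C_0\gtrsim\min\{\rho_-(0),\;\kappa m/(|q_0|_\infty+\kappa)\}$.
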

 
The emergence of flocking for short-range interactions was treated in a  few previous works.  The first,  Shvydkoy-Tadmor \cite{ST2018}, in which they consider a short-range \emph{topological} interactions. In \cite{HKPZ2018},  S.-Y. Ha et. al. study the flocking of the \emph{one-dimensional } CS particle system \ref{cs} providing  explicit  critical coupling strength required for the emergence of flocking. 
C. Jin in \cite{Ji2018} treated the  Mostch-Tadmor agent-based system \cite{MT2011} with short-range communication, proving  flocking under small perturbation of regular-stochastic matrices.

\section{Chain connectivity}\label{prelim}

The global flocking analysis  of  system \eqref{system} and its underlying discrete version \eqref{cs} driven by  compactly supported  communication kernel $\kphi$ requires a localization procedure. To this end we note that the key local scale  in the dynamics of these systems  is the \emph{communication range} $\nicefrac{1}{2}D_\kphi$, where we shall often utilize the following basic observation, namely --- if two  sets $A, B\subset \Omega$ overlap a non-empty intersection, each with diameter $\text{diam}(A), \text{diam}(B)\leq \nicefrac{1}{2}D_\kphi$, then their entries are in direct communication so that $\displaystyle \mathop{\inf}_{\bx\in A,\, \by\in B}\phi(|\bx-\by|)\geq 1 $. We then perform  a localization procedure based in chain connectivity  of the initial support, $\Sp_0$.

\begin{definition}[{\bf Chain connectivity}]\label{cc} Fix $r>0$. A bounded domain ${\mathcal D}$ is  said to be \emph{chain connected at scale} $r$ if there exists a finite $\ind$ (possibly dependent on $r$) such that  every pair $\bx,\by \in \overline{{\mathcal D}}$ is connected through a chain ${\mathcal B}_{\bx,\by}$, of  at most $\ind$  balls of radii $r$ with non-empty overlap: 
\be\label{eq:h3}
{\mathcal B}_{\bx,\by}:=\{B_{r}(\bc_\alpha)\}_{\alpha=1}^{\ind} \ \ \text{such that} \ \ 
\left\{\begin{array}{l}
 B_{r}(\bc_{\alpha})\cap B_{r}(\bc_{\alpha+1})\neq \emptyset, \quad \bc_\alpha \in{\mathcal D}\vspace*{0.2cm}\\ 
\text{connecting} \ \bx\in \overline{B}_r(\bc_1)$ \ \text{and} \  $\by\in \overline{B}_r(\bc_\ind).
\end{array}\right.
\ee
\end{definition}

Note that  the balls $B_r(\bc_\alpha)$ are not requited to be inside ${\mathcal D}$ --- only their centers do, so in the sequel we can also treat discrete domains. 
 A regular, simply-connected domain with a smooth boundary is chain connected. For example, if the boundary of ${\mathcal D}$ satisfies the interior ball condition with scale  $r$ (so that each $\bx\in \partial {\mathcal D}$ admits  a ball $B=B_r(\by_\bx)\subset {\mathcal D}$ such that $\bx\in\partial B$) and the interior of ${\mathcal D}$ is arc-connected, then it is chain connected with scale $r$.
We shall not dwell on precise characterization of chain-connected domains and simply make the following.

\begin{assume}[{\bf $\Sp_0$ is chain connected and non-vacuous}]\label{ass:s0}
There exists $r>0$ such that the initial support $\Sp_0=\text{supp}(\rho_0)$ satisfies the following.

\smallskip\noindent
 (i) It is chain connected  at scale $\rc$, namely,
 \begin{equation}\label{cdef}
  \mbox{there exist} \  n \ \mbox{balls of diameter} \ \rc \ \mbox{with}  \ \rc \leq \frac{1}{6}D_\kphi \  \mbox{such that} \ \eqref{eq:h3} \ \mbox{holds};
 \end{equation}
 \smallskip\noindent
  (ii) All balls with center in $\Sp_0$ with diameter $\frac{\rc}{100}$ have a positive mass
\begin{equation}\label{mdef}
m=\inf_{B\in {\mathcal B}_{100}}\rho(B) >0, \quad {\mathcal B}_{100}:=
\Big\{B_{\frac{\rc}{100}}(\bc) \, | \, \bc\in \Sp_0\Big\}.
\end{equation}
\end{assume} 

\begin{remark}[{\bf On the length of the chain}]\label{rem:length} Since we did not insist on a minimal chain, its length $\ind$ (counting the number of balls) is allowed to be large. If we restrict attention to \emph{minimal} chain, however, then we may assume that each new step of the chain contains a non-negligible increment, say a ball of radius $\frac{\rc}{100}$
\[
B_{\rc}(\bc_{\alpha}) \setminus\bigcup_{\beta<\alpha} B_{\rc}(\bc_\beta) \supset B_{\frac{\rc}{100}}(\bb_\alpha) \  \mbox{centered around some} \ \bb_\alpha \in B_{\rc}(c_{\alpha}).
\]
Then   since $\big\{ B_{\rc}(\bb_\alpha)\big\}_\alpha$ are disjoint and recalling $\rho(\Sp_0)=1$, we conclude, in view of \eqref{mdef}, that $\displaystyle n(\rc) \lesssim \frac{1}{m}$. 
Alternatively, the cardinality  of  (minimal) chains  depends on the geometry of $\Sp_0$: for example,  if $\Sp_0$ is chord-connected then for `small' balls, $\displaystyle n(\rc) \lesssim \frac{D_{\Sp_0}}{\rc}$.
In particular, it follows that each `small' ball, say, $B_{\frac{\rc}{100}}(\cdot)$, is covered by at most a  \emph{fixed} ($\rc$-independent) number of balls from any chain ${\mathcal B}_{\bx,\by}$.
\end{remark}

Observe that assumption \ref{ass:s0} does \emph{not} require $\rho_0$ to be uniformly non-vacuous --- only its average over balls above scale $\frac{\rc}{100}$ need be bounded away from zero (of course, one can replace 100 by another fixed scale$>2$). Consequently, the chain connectivity also applies  in the drastically different scenario of $\rho_0$ being a sum of Dirac's masses, which enables us to treat the flocking behavior of both --- the discrete agent-based  dynamics \eqref{cs}  and the continuum hydrodynamics, treated  in theorems \ref{flockcont} and respectively \ref{flockpart} above.

\medskip\noindent
\paragraph{{\bf Propagation of connectivity}} Let us introduce the key component of our local-to-global approach, in which we  propagate the chain connectivity of $\Sp_0$  for the chain connectivity of $\Sp(t)$. 
Let $\XX^t(\bx)=\XX(t,\bx)$ denote the (forward) flow map, 
\begin{equation}\label{chardef}
\ddt\XX(t,\bx)=\bu(t,\XX(t,\bx)),\quad\mbox{and}\quad \XX(0,\bx)=\bx.
\end{equation}
It is well known that any smooth solution of the continuity equation preserves the mass of sets along the characteristic flow. That is
$ \rho(t,\XX^t(A))=\rho_{0}(A)$ for any set $A\subset \Sp_0$ with image $\XX^t(A)$ along the flow at time $t$. 
We claim that local propagation of mass ensures that the initial connectivity of $\Sp_0$ is  preserved in time. To this end, for any  $\bx,\by \in \overline{\Sp}(t)$, we let $\bx_0=\,^{t}\XX(\bx)\in \Sp_0$ and 
$\by_0=\,^{t}\XX(\by)\in \Sp_0$ denote the \emph{pre-images} of $\bx$ and $\by$, determined by the (backward) flow, abbreviated here as $^{t}\XX(\cdot)$.  Then by assumption, $\bx_0$ and $\by_0$  are connected through a chain, ${\mathcal B}_{\bx_0,\by_0}=\{B_r(\bc_\alpha)\}_\alpha$ of scale $\rc$. Let ${\mathcal F}_{\bx,\by}^t$ be the image chain by the forward flow
\be\label{eq:h4}
{\mathcal F}_{\bx,\by}^t=\{F_\alpha^t\}_{\alpha=1}^\ind, \qquad F_\alpha^t:= \XX^t(B_r(\bc_\alpha)), \quad \alpha=1,2, \ldots, \ind.
\ee
In this manner we obtain a finite chain ${\mathcal F}_{\bx,\by}^t$ connecting
$\bx\in F_1^t$ to $\by\in F_\ind^t$ with intersecting  consecutive pairs,
${F}^t_{\alpha}\cap {F}^t_{\alpha+1}\neq \emptyset$.

Thus,  the chains ${\mathcal F}_{\bx,\by}^t$ in \eqref{eq:h4}  induce  the chain connectivity of $\Sp(t)$,  in analogy to the connectivity of $\Sp_0$ by  the chains of balls ${\mathcal B}_{\bx_0,\by_0}$   in \eqref{eq:h3}.  The essential question is whether the \emph{scale} of the chain ${\mathcal F}_{\bx,\by}^t$, measured by 
$\displaystyle \mathop{\max}_\alpha \text{diam}(F_\alpha^t)$, retains the  order of the original scale  $\rc$ of ${\mathcal B}_{\bx_0,\by_0}$. It is here that  propagation of alignment properties of the flow play a central role.

\section{Proof of the main results}\label{main}
The proof of theorems \ref{flockcont} and \ref{flockpart} relies on three propositions. In the first, proposition \ref{endec}, we estimate a localized version of velocity fluctuations $|\delta \bu|_2$ and use the chain connectivity to piece together the local estimates and obtain local-in-time exponential decay of whole $|\delta \bu|_2$. Then, in Proposition \ref{veldec}, we use this decay to prove local-in-time exponential decay of $|\delta \bu|_\infty$. We need an $L^\infty$ decay because, subsequently in Proposition \ref{tdtp}, we bound the total distance traveled and show that with $\kappa$ large enough, the particles do not escape the range dictated by the essential support of $\kphi$. This enables us to prolong the local-in-time estimates and prove the main results.

\paragraph{{\bf Exponential energy decay}}\label{mainestim}
\noindent To prove the exponential decay of the kinetic energy, we introduce the maximal time, ${\mathcal T}=\sup \tau(\kappa)$, during which the flow map does not expand more than $3\rc$,
\begin{align}\label{tal}
\mathcal{T}:=\sup\Bigg\{\tau>0: \sup_{\substack {\bx_0,\by_0\in \Sp_0 \\ |\bx_0-\by_0|\leq r} }|\XX^{\tau}(\bx_0)-\XX^{\tau}(\by_0)|\leq 3\rc\Bigg\}.
\end{align}
Since by assumption, \eqref{system} admits a smooth solution, there exists a small enough  ${\mathcal T}>0$ such that \eqref{tal} holds. We will show in Proposition \ref{tdtp} below, that during that time interval $[0,\mathcal{T}]$, the fluctuations $|\delta \bu(t)|_2$ decay exponentially. Our objective is then to show  that $\mathcal{T}=\infty$ when $\kappa$ is large enough. We achieve this in proposition \ref{tdtp}.

\begin{proposition}\label{endec}
Let $(\rho, \bu)$ be a strong solution to \eqref{system} in $[0,{\mathcal T}]$ subject to initial data $(\rho_0,\bu_0)\in (L^1(\Sp_0), C^1(\Sp_0))$ satisfying assumption \ref{ass:s0}. In particular, $\text{supp}(\rho_0)$ is chain connected at scale $\rc\leq \frac{1}{6}D_\kphi$.
Then, there exits a constant $\eta=\eta(\rc,m)>0$,  such that
\begin{equation}\label{eq:ener}
|\delta \bu(t)|_2\leq e^{-\kappa \eta t}|\delta \bu_0|\hspace{1em}\mbox{for all}\ \ t\leq \mathcal{T}, \qquad \eta\lesssim \frac{m^2}{n(\rc)}.
\end{equation}
\end{proposition}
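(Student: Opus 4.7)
The plan is to reduce the proposition to a Poincar\'e-type inequality $|\delta\bu(t)|_2^2 \lesssim (n/m^2)\,\mathcal{E}(t)$, where $\mathcal{E}(t)$ denotes the enstrophy appearing on the right-hand side of \eqref{ens}; once this is in hand, \eqref{ens} and Gr\"onwall's inequality immediately yield \eqref{eq:ener} with $\eta \sim m^2/n$.

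\emph{Geometric setup on $[0,\mathcal{T}]$.} By the definition \eqref{tal} of $\mathcal{T}$, each flow image $F_\alpha^t := \XX^t(B_\rc(\bc_\alpha))$ has diameter at most $3\rc$. Since consecutive chain balls overlap and $\rc \leq D_\kphi/6$, both $F_\alpha^t\times F_\alpha^t$ and $(F_\alpha^t \cup F_{\alpha+1}^t)^2$ lie inside the $D_\kphi$-communication range, hence $\kphi \geq 1$ there. Mass conservation along characteristics gives $\rho(t,F_\alpha^t) = \rho_0(B_\rc(\bc_\alpha)) \geq m$; passing to a minimal refinement of the chain (cf.\ Remark \ref{rem:length}) I also arrange $\rho(t,F_\alpha^t \cap F_{\alpha+1}^t) \geq m$.

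\emph{Local-to-global Poincar\'e via chains.} Set $\bar\bu_\alpha := \rho(F_\alpha^t)^{-1}\int_{F_\alpha^t}\bu\,d\rho$. For any $\bx,\by \in \Spt$, I write $\bu(\bx)-\bu(\by)$ as the telescoping sum $[\bu(\bx)-\bar\bu_1] + \sum_{\alpha=1}^{n-1}[\bar\bu_\alpha-\bar\bu_{\alpha+1}] + [\bar\bu_n-\bu(\by)]$ along the chain joining the pre-images of $\bx$ and $\by$, and Cauchy--Schwarz gives a pointwise bound with a factor $(n+1)$ in front. Each local variance $\int_{F_\alpha^t}|\bu-\bar\bu_\alpha|^2\,d\rho$ is controlled by $m^{-1}$ times the restricted enstrophy $\mathcal{E}_\alpha$ on $F_\alpha^t\times F_\alpha^t$ (Poincar\'e on a ball where $\kphi\geq 1$). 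For the ``jumps'' $|\bar\bu_\alpha-\bar\bu_{\alpha+1}|^2$, inserting an intermediate mean over the overlap $F_\alpha^t\cap F_{\alpha+1}^t$ and applying Cauchy--Schwarz twice with both masses $\geq m$ produces a bound of order $m^{-2}\mathcal{E}_\alpha'$, where $\mathcal{E}_\alpha'$ is the enstrophy on $(F_\alpha^t\cup F_{\alpha+1}^t)^2$.

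\emph{Assembly and main obstacle.} Integrating the pointwise bound against $d\rho(\bx)d\rho(\by)$ and summing over the chain yields $|\delta\bu|_2^2 \leq C\,n\,\bigl(m^{-1}\sum_\alpha \mathcal{E}_\alpha + m^{-2}\sum_\alpha \mathcal{E}_\alpha'\bigr)$. By minimality of the chain (Remark \ref{rem:length}), each ordered pair $(\bx,\by)$ appears in only $O(1)$ of the sets $F_\alpha^t\times F_\alpha^t$ and $(F_\alpha^t\cup F_{\alpha+1}^t)^2$, so $\sum_\alpha\mathcal{E}_\alpha + \sum_\alpha\mathcal{E}_\alpha' \lesssim \mathcal{E}$; this produces the desired Poincar\'e inequality, and Gr\"onwall finishes the proof. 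The delicate step is precisely this last assembly: one must justify the bounded-multiplicity covering uniformly on $[0,\mathcal{T}]$ and guarantee $\rho(t,F_\alpha^t\cap F_{\alpha+1}^t)\geq m$, since a naive chain from Definition \ref{cc} might intersect only on a measure-zero set. This is where the scale separation in \eqref{mdef} and the minimal-chain construction are essential --- without them an extra power of $n$ creeps into the constant, degrading the rate $\eta \sim m^2/n$ to $m^2/n^2$ or worse.
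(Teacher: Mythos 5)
Your overall strategy — telescope through the chain averages $\bar\bu_1,\dots,\bar\bu_n$, pick up a factor $(n+1)$ from Cauchy--Schwarz, and bound the local terms and jump terms by $m^{-1}$ and $m^{-2}$ times a restricted enstrophy, then Gr\"onwall — is exactly the paper's. But your treatment of the jump terms $|\bar\bu_\alpha-\bar\bu_{\alpha+1}|^2$ introduces a genuine gap that the paper's proof does not have.

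You insert an intermediate average over the overlap $F_\alpha^t\cap F_{\alpha+1}^t$ and therefore need $\rho(t,F_\alpha^t\cap F_{\alpha+1}^t)\gtrsim m$. You yourself flag that the chain in Definition \ref{cc} only requires a non-empty intersection, which could be measure-zero, and you try to repair this by ``passing to a minimal refinement (cf.\ Remark~\ref{rem:length}).'' That remark, however, only guarantees a non-negligible \emph{increment} $B_{r/100}(\bb_\alpha)\subset B_r(\bc_\alpha)\setminus\bigcup_{\beta<\alpha}B_r(\bc_\beta)$ — precisely the part \emph{outside} the preceding balls — and is used to bound $n(\rc)$; it does not give you mass in the intersection, and Assumption~\ref{ass:s0}(ii) bounds mass only for $\frac{\rc}{100}$-balls centered in $\Sp_0$, not for intersections. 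So as written the jump estimate does not close.

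The paper sidesteps this entirely: it applies the Cauchy--Schwarz bound \eqref{eq:basic} \emph{directly} with $A=F_\alpha^t$, $B=F_{\alpha+1}^t$, giving
$|\bar\bu_\alpha-\bar\bu_{\alpha+1}|^2\leq \frac{1}{m^2}\int_{F_{\alpha+1}^t}\int_{F_\alpha^t}|\bu(\bx')-\bu(\by')|^2\,\drho\,\drho$
using only $\rho(F_\alpha^t),\rho(F_{\alpha+1}^t)\geq m$. The overlap is then used purely as a \emph{geometric} device: since $F_\alpha^t$ and $F_{\alpha+1}^t$ meet and each has diameter $\leq 3\rc$, every pair $(\bx',\by')\in F_{\alpha+1}^t\times F_\alpha^t$ satisfies $|\bx'-\by'|\leq 6\rc\leq D_\kphi$, so the double integral sits inside $\JR$ where $\kphi\geq 1$. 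No lower bound on the mass of the intersection is needed. Replace your two-step detour through $\bar\bu_{F_\alpha\cap F_{\alpha+1}}$ with this direct application of \eqref{eq:basic}, keep your bounded-multiplicity argument for the assembly, and the proof closes cleanly with $\eta\lesssim m^2/n$.
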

\noindent
Remark that in proposition \ref{tdtp} below we derive a minimal value of the threshold  $\kappa$ which is inversely proportional to $\eta$, hence we will pay attention to tracing upper-bound of the latter in terms of $m$ and $n(\rc)$.
\begin{proof}
We begin  by noting that during the  time interval $[0,{\mathcal T}]$, the maximal diameter of each element in the connecting chain $F_\alpha^t\in {\mathcal F}_{\bx,\by}^t$ cannot expand more than $\rc$ units beyond the diameter of its pre-image ball $B_{\rc}(\bc_\alpha)$,
\begin{align}\label{S2}
{\rm diam}(F_\alpha^t)\leq 3\rc,\qquad  t\leq{\mathcal T}.
\end{align}
Moreover, $\rho(F_\alpha^t)=\rho(B_{\rc}(\bc_\alpha))\geq m$.
The proof proceeds in three steps.

\medskip\noindent
{\bf Step 1} (Restriction). We begin by making a general observation that allows us to restrict attention to fluctuations over sets with diameter width $6\rc\leq D_\kphi$. 
Specifically, we  claim that for \eqref{eq:ener} to hold, it suffices to bound  energy fluctuations in terms of the restricted enstrophy on stripes of width$\leq 6\rc$
\begin{equation}\label{ineq1}
|\delta \bu(t)|_2^{2} \leq \frac{1}{\eta} \JR, \quad \JR:= \int_{\bx\in\Sp}\int_{\by\in\Sp\cap B_{6\rc}(\bx)}\hspace*{-0.5cm}|\bu(t,\bx)-\bu(t,\by)|^{2} \drho(\bx)\drho(\by).
\end{equation}
Indeed,  since $6\rc$ does not exceed the communication scale $D_\kphi$ so that $\phi(|\bx-\by|)\geq 1$ wherever $\by\in \Spt\cap B_{6\rc}(\bx)$, the enstrophy bound \eqref{ens}  yields  
\begin{align*}\begin{aligned}
\frac{1}{2}\ddt |\delta \bu(t)|^2_2 &= -\kappa\int_{\Spt\times\Spt}\hspace*{-0.8cm} \kphi(|\bx-\by|) \big|\bu(t,\bx)-\bu(t,\by)\big|^{2} \drho(\bx) \drho(\by)\\ 
& \leq- \kappa  \int_{\bx\in\Sp}\int_{\by\in\Spt\cap B_{6\rc}(\bx)}\hspace*{-1.3cm}\kphi(|\bx-\by|)|\bu(t,\bx)-\bu(t,\by)|^{2} \drho(\bx)\drho(\by)\leq -\kappa \JR, 
\end{aligned}
\end{align*}
 and together with the assumed bound \eqref{ineq1}, we end up with exponential decay rate corresponding to \eqref{eq:ener}
\[
\ddt|\delta \bu(t)|_2^{2} \leq -2\kappa\JR 
         \leq -2\kappa \eta |\delta \bu(t)|_2^{2}.
\]

\medskip\noindent
{\bf Step 2} (Localization). To simplify notation  we often suppress the time dependence of quantities  evaluated at time $t$, abbreviating $\Sp=\Spt, \bu(\bx)=\bu(t,\bx), F_\alpha=F_\alpha^t$ etc. In the sequel we shall work with averages rather than point values: we use $\bar{\bu}_A$ to denote the average 
$\displaystyle \bar{\bu}_A:=\frac{1}{\rho(A)}\int_A \bu(\bx)\drho(\bx)$.\newline
  For an arbitrary pair  $\bx,\by\in \overline{\Sp}$, we consider their connecting chain ${\mathcal F}_{\bx,\by}^t=\{F_\alpha\}_{\alpha=1}^\ind$, and express the difference $\bu(\bx)-\bu(\by)$ in terms of the intermediate averages along that chain,
\[
\bu(\bx)-\bu(\by)\equiv (\bu(\bx)-\bar{\bu}_{1}) + \sum_{\alpha=1}^{\ind-1} (\bar{\bu}_{{\alpha}}-\bar{\bu}_{{\alpha+1}}) + (\bar{\bu}_{\ind}-\bu(\by)), \quad \bar{\bu}_{\alpha}:= \bar{\bu}_{F_\alpha}.
\]
 This enables us to decompose   a global energy fluctuations bound into  
\begin{align*}
\int_{\Sp\times\Sp}&|\bu(\bx)-\bu(\by)|^2 \drho(\bx)\drho(\by) \\
&  \leq (\ind+1) \Big(\int_{\by\in \Sp}\int_{\bx\in \Sp} \hspace*{-0.3cm}|\bu(\bx)-\bar{\bu}_{1}|^2 \drho(\bx)\drho(\by) \\
& \qquad \qquad  \ \ + \int_{\Sp\times \Sp} \hspace*{-0.0cm} \sum_{\alpha=1}^{\ind-1} |\bar{\bu}_{{\alpha}}-\bar{\bu}_{{\alpha+1}}|^2\drho(\bx)\drho(\by)
+\int_{\bx\in \Sp}\int_{\by\in \Sp}\hspace*{-0.3cm} |\bu(\by)-\bar{\bu}_{\ind}|^2 \drho(\by)\drho(\bx)\Big) 
\\
&  =:(n+1)( I + II + III).
\end{align*}
Note that the averages $\{\bar{\bu}_\alpha\}_\alpha$   are dependent on $(\bx,\by)$, 
and in particular therefore,  the first and third integrands on the right depend on $(\bx,\by)$ because $\bar{\bu}_1$ and $\bar{\bu}_n$ do.
It remains to bound the local fluctuations on the right.

\medskip\noindent
{\bf Step 3} (Local estimates). 
  We  bound the first term first.
Since $F_1 \ni \bx$ and since by \eqref{S2} its diameter, at least for $ t\leq {\mathcal T}$, does not exceed  $3\rc$, which is less than the communication scale $\nicefrac{1}{2}D_\kphi$, we have
\begin{align*}
\int_{\bx\in \Sp} \hspace*{-0.3cm}|\bu(\bx)-\bar{\bu}_{1}|^2 \drho(\bx)
&= \int_{\bx\in \Sp} \hspace*{0.1cm}\frac{1}{\rho^2(F_1)}\left|\int_{\by'\in F_1}\big(\bu(\bx)-\bu(\by')\big) \drho(\by')\right|^2 \drho(\bx)\\
& \leq \int_{\bx\in \Sp} \hspace*{0.1cm}\frac{1}{\rho(F_1)} \int_{\by'\in \Sp\cap B_{3\rc}(\bx)}\hspace*{-0.3cm} |\bu(\bx)-\bu(\by')|^2 \drho(\by') \drho(\bx)\leq \frac{1}{m} \JR.
\end{align*}
The last bound is uniform with respect to the dependence of $\bar{\bu}_1$ on $\by$, and  since $\Sp$ has unit mass, 
\[
I=\int_{\by\in \Sp}\int_{\bx\in \Sp} \hspace*{-0.3cm}|\bu(\bx)-\bar{\bu}_{1}|^2 \drho(\bx)\drho(\by) \leq \frac{1}{m} \JR.
\]
Similarly,  for the third term, $\displaystyle III=\int_{\bx\in \Sp}\int_{\by\in \Sp}\hspace*{-0.3cm} |\bu(\by)-\bar{\bu}_{\ind}|^2 \drho(\by) \leq \frac{1}{m}\JR$.

\smallskip\noindent
To estimate the difference of consecutive averages in $II$,  we begin by noting that 
\be\label{eq:basic}
\rho(A)\rho(B) |\bar{\bu}_A-\bar{\bu}_B|^2 \leq \int_{(\bx',\by')\in (A\times B)}\hspace*{-0.3cm}|\bu(\bx')-\bu(\by')|^2 \drho(\bx')\drho(\by'),
\ee
which is nothing but the Cauchy-Schwartz bound for the identity  
\[
\bar{\bu}_A-\bar{\bu}_B=\int_{(\bx',\by')\in (A\times B)}\hspace*{-0.3cm}\big(\bu(\bx')-\bu(\by')\big)\frac{\drho(\bx')\drho(\by')}{\rho(A)\rho(B)}
\]
Set  $A=F_{\alpha}^t$ and  $B=F_{\alpha+1}^t$, and recall that the mass of each is at least $m$; hence
\be\label{eq:sub}
|\bar{\bu}_{\alpha}-\bar{\bu}_{\alpha+1}|^2\leq \frac{1}{m^2}\int_{\bx'\in F_{\alpha+1}}\int_{\by'\in F_{\alpha}} |\bu(\bx')-\bu(\by')|^2 \drho(\by)\drho(\bx).
\ee
By assumption, $F_\alpha$ and $F_{\alpha+1}$ (which  still  depend on $(\bx,\by)$) have a non-empty intersection hence \eqref{S2} implies for $(\bx',\by')\in (F^t_{\alpha+1}\times F^t_\alpha) \leadsto |\by'-\bx'|\leq \text{diam}(F^t_{\alpha+1})+\text{diam}(F^t_\alpha) \leq 6\rc$,
\[
|\bar{\bu}_{\alpha}-\bar{\bu}_{\alpha+1}|^2\leq \frac{1}{m^2}\int_{\bx'\in F_{\alpha+1}}\int_{\by'\in \Sp\cap B_{6\rc}(\bx')} \hspace*{-0.3cm}|\bu(\bx')-\bu(\by')|^2 \drho(\by')\drho(\bx').
\]
We note that although the elements in the chain ${\mathcal F}^t_{\bx,\by}$ have non-empty overlaps, the assumed minimality of the chain implies that the union   $\cup_\alpha F_\alpha^t$ can cover $\Spt$ only  \emph{finitely} many times (see remark \ref{rem:length} above). Consequently, 
the second term does not exceed  (recall $ \rho(\Spt)\equiv 1$) 
\begin{align*}
II &= \int_{\Sp\times \Sp}\sum_{\alpha=1}^{\ind-1}|\bar{\bu}_{\alpha}-\bar{\bu}_{\alpha+1}|^2\drho(\bx)\drho(\by)\\
& \lesssim \frac{1}{m^2}\int_{\bx'\in \Sp}\int_{\by'\in \Sp\cap B_{6\rc}(\bx')}\hspace*{-0.3cm} |\bu(\bx')-\bu(\by')|^2 \drho(\by')\drho(\bx') \leq \frac{1}{m^2} \JR.
\end{align*}
We conclude that \eqref{ineq1} holds with $\displaystyle \eta \lesssim \frac{m^2}{n(\rc)}$:
\begin{align}\label{mu}
|\delta \bu(t)|_2^2 \lesssim (n+1)\big(\frac{2}{m}+\frac{1}{m^2}\big) \JR, 
\end{align}
and \eqref{eq:ener} follows.
\end{proof}

\paragraph{{\bf Bounding the diameter of velocity field}} 
 The next proposition provides an estimate of the maximal velocity fluctuations, $|\delta \bu|_\infty$, in terms of the kinetic energy and coefficients that decay exponentially.
\begin{proposition}\label{veldec}
Let $(\rho, \bu)$ be a strong solution to \eqref{system} in $[0,{\mathcal T}]$ subject to initial data $(\rho_0,\bu_0)\in (L^1(\Sp_0), C^1(\Sp_0))$ with a chain connected support $\text{supp}(\rho_0)$ of scale $\rc\leq \frac{1}{6}D_\kphi$.
Then we have
\begin{align}\label{veldec0}
|\delta \bu(t)|_{\infty}\leq |\delta \bu_{0}|_{\infty}e^{-\kappa m t} + 
\kappa \int_{0}^{t}e^{-\kappa  m(t-\sigma)}|\delta \bu(\sigma)|_2 d\sigma, \qquad \ t \leq T.
\end{align}
\end{proposition}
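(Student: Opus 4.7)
\textbf{Proof plan for Proposition \ref{veldec}.} The strategy is to reduce the infinity-norm bound to a family of one-dimensional extremal problems. For each unit vector $\xi \in \mathbb S^{d-1}$, set $u(t,\bx) := \xi \cdot \bu(t,\bx)$ and introduce the extremes
\[
u_+(t) := \max_{\bx \in \Spt} u(t,\bx), \qquad u_-(t) := \min_{\bx \in \Spt} u(t,\bx).
\]
Since $|\delta \bu(t)|_\infty = \sup_{\xi \in \mathbb S^{d-1}}\bigl(u_+(t) - u_-(t)\bigr)$, it suffices to bound $D_\xi(t) := u_+(t) - u_-(t)$ uniformly in $\xi$.

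At a point $\bx_+(t) \in \Spt$ where $u(t,\cdot)$ attains its maximum, the $\xi$-projected momentum equation yields
\[
\dot u_+(t) \leq \kappa \int_{\Spt} \kphi(|\bx_+(t) - \by|)\bigl(u(t,\by) - u_+(t)\bigr)\,d\rho(\by),
\]
interpreted as an upper Dini derivative through a Danskin-type envelope argument, legitimized by the $C^1$-regularity of $\bu$. The integrand is non-positive, so the integration can be restricted to any convenient subset. I would take $F_+(t) := \XX^t\bigl(B_{r/100}(\bx_0)\bigr)$, where $\bx_0$ is the pre-image of $\bx_+(t)$ under the flow. Mass conservation together with assumption \ref{ass:s0}(ii) gives $\rho(F_+(t)) \geq m$, while for $t \leq \mathcal T$ the bound $\mathrm{diam}(F_+(t)) \leq 3r \leq \tfrac12 D_\kphi$ guarantees $\kphi \geq 1$ on $F_+(t)$. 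Therefore
\[
\dot u_+(t) \leq -\kappa\,\rho(F_+(t))\bigl(u_+(t) - \bar u_{F_+(t)}\bigr) \leq -\kappa m\bigl(u_+(t) - \bar u_{F_+(t)}\bigr),
\]
and, symmetrically, $\dot u_-(t) \geq -\kappa m\bigl(u_-(t) - \bar u_{F_-(t)}\bigr)$ at a minimizer, with an analogous mass-$\geq m$ set $F_-(t)$.

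Subtracting the two inequalities and applying the Cauchy--Schwarz identity \eqref{eq:basic} with $A = F_+(t)$ and $B = F_-(t)$ gives
\[
|\bar u_{F_+(t)} - \bar u_{F_-(t)}| \leq \frac{|\delta \bu(t)|_2}{\sqrt{\rho(F_+(t))\,\rho(F_-(t))}} \leq \frac{|\delta \bu(t)|_2}{m},
\]
which produces the closed differential inequality
\[
\dot D_\xi(t) \leq -\kappa m\, D_\xi(t) + \kappa\,|\delta \bu(t)|_2.
\]
Notably the mass factors $m$ cancel precisely, producing the prefactor $\kappa$ appearing in \eqref{veldec0}. Duhamel's formula for this scalar linear inequality then delivers
\[
D_\xi(t) \leq D_\xi(0)\,e^{-\kappa m t} + \kappa \int_0^t e^{-\kappa m(t-\sigma)}\,|\delta \bu(\sigma)|_2\,d\sigma,
\]
and taking the supremum over $\xi$ yields \eqref{veldec0}.

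The main obstacle is rigorously justifying the envelope argument that produces the differential inequality for $\dot u_\pm$: the extremizers $\bx_\pm(t)$ need not depend smoothly on $t$, so $\dot u_\pm$ must be interpreted as an upper Dini derivative via a standard Danskin-type maximum principle. This is legitimate because $\bu \in C^1(\Spt)$ and $\Spt$ evolves continuously along the flow. A secondary technical point is that the localization sets $F_\pm(t)$ must remain within the communication range for all $t \leq \mathcal T$; this is precisely encoded in the definition \eqref{tal} of $\mathcal T$, combined with mass conservation along the flow map.
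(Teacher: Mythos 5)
Your proposal is correct and follows essentially the same approach as the paper's proof: project onto a unit vector, derive a differential inequality for the extremal projected values by localizing the alignment integral to a set of mass $\geq m$ and diameter within the communication range, bound the difference of local averages via \eqref{eq:basic}, and close by Duhamel. The only cosmetic difference is your explicit choice of $F_\pm(t) = \XX^t\bigl(B_{r/100}(\bx_0)\bigr)$ rather than the paper's choice of the endpoint sets from the flow-image chain ${\mathcal F}^t_{\bx_+,\bx_-}$; both yield $\rho(F_\pm)\geq m$ and diameter $\lesssim 3\rc \leq \tfrac12 D_\kphi$ on $[0,\mathcal T]$, so the estimates are identical.
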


\begin{proof}
We first employ the argument of \cite[theorem 2.3]{MT2014}, \cite[Sec. 1]{HeT2017} in order to restrict attention to a scalar projection of $\bu$: fix an arbitrary  unit vector $\bw$, then a projection of \eqref{system} on $\bw$ yields
\[
\partial_t \langle \bu,\bw\rangle + \bu\cdot\nabla_\bx \langle \bu,\bw\rangle = \kappa
\int_\Spt \phi(|\bx-\by|)(\langle \bu(\by),\bw\rangle-\langle \bu(\bx),\bw\rangle)\drho(\by).
\]
Let $\bu_\pm(t)$ be the extremal projected values, $\langle \bu_\pm,\bw\rangle =\left\{\begin{array}{c}{\max}_\Spt\\ {\min}_\Spt\end{array}\right\}  \langle \bu,\bw\rangle$ at $\bx_+=\bx_+(t)$ and, respectively, at $\bx_-=\bx_-(t)$\footnote{$\bx_\pm$ need not be unique --- \emph{any} extreme location will suffice.}. Let $F_+^t\ni \bx_+$ and $F_-^t\ni \bx_-$ be the two sets from the chain ${\mathcal F}^t$ covering $\Sp(t)$. Since $\text{diam}({\mathcal F}_\pm^t) \leq 3\rc \leq \nicefrac{1}{2}D_\kphi$,
\begin{align*}
\ddt \langle \bu_+,\bw\rangle  &= \kappa
\int_\Spt \phi(|\bx_+-\by|)(\langle \bu(\by),\bw\rangle-\langle \bu_+,\bw\rangle)\drho(\by)
\leq \kappa
\int_{F_+^t}(\langle \bu(\by),\bw\rangle-\langle \bu_+,\bw\rangle)\drho(\by),\\
\ddt \langle \bu_-,\bw\rangle  & = \kappa
\int_\Spt \phi(|\bx_--\by|)(\langle \bu(\by),\bw\rangle-\langle \bu_-,\bw\rangle)\drho(\by)
\geq \kappa 
\int_{F_-^t} (\langle \bu(\by),\bw\rangle-\langle \bu_-,\bw\rangle)\drho(\by),
\end{align*}
and since the mass of $F_\pm^t$ is at least $m$, we control the  velocity \emph{diameter} ${D}_{\bu(t)}:=\bu_+(t)-\bu_-(t)$ projected  on $\bw$,
\begin{align}\label{eq:constancy}
\ddt &\langle {D}_{\bu(t)}, \bw\rangle\nonumber \\
&  \leq  \kappa
\frac{m}{\rho(F_+^t)}\int_{F_+^t}\hspace*{-0.2cm}(\langle \bu(\by),\bw\rangle-\langle \bu_+,\bw\rangle)\drho(\by)
-\kappa\frac{m}{\rho(F_-^t)}
\int_{F_-^t}\hspace*{-0.2cm} (\langle \bu(\by),\bw\rangle-\langle \bu_-,\bw\rangle)\drho(\by)\\
& =  -\kappa  m \langle {D}_{\bu(t)},\bw\rangle + \kappa   m \langle \bar{\bu}_{F_+^t} -\bar{\bu}_{F_-^t},\bw\rangle, \qquad \langle {D}_{\bu(t)},\bw\rangle=\langle \bu_+(t)-\bu_-(t),\bw\rangle.
\nonumber
\end{align}
By \eqref{eq:basic}, $m|\bar{\bu}_{F_+^t} - \bar{\bu}_{F_-^t}|$ does not exceed 
$|\delta \bu|_2$,  hence
\[
\ddt \langle {D}_{\bu(t)}, \bw\rangle \leq -\kappa  m \langle {D}_{\bu(t)},\bw\rangle 
+ \kappa  |\delta \bu(t)|_2.
\]
Integration yields 
$\langle {D}_{\bu(t)}, \bw\rangle \leq \langle {D}_{\bu_{0}}, \bw\rangle e^{-\kappa m t} + 
\kappa \int_{0}^{t}e^{-\kappa  m(t-\sigma)}|\delta \bu(\sigma)|_2 d\sigma$,
and  taking the supremum over all unit $\bw$'s recovers the desired bound \eqref{veldec0} for $|\delta \bu(t)|_\infty = \displaystyle \mathop{\max}_{|\bw|=1}\langle {D}_{\bu(t)}, \bw\rangle$.
\end{proof}

\begin{remark}
Observe that the decay rate for the diameter $\langle D_{\bu(t)},\bw\rangle$ in \eqref{eq:constancy} need \emph{not} be strictly negative: as noted in \cite[theorem 3.2]{ST2018}, there is a possibility of constancy (flattening) of $\bu(t)$ in the $F_\pm^t$-neighborhoods of the extrema $\bu_\pm(t)$, where the RHS of \eqref{eq:constancy} cannot be bounded away from $0-$. Instead, this is circumvented here by plugging  the known exponential decay of the energy \eqref{eq:ener} into \eqref{veldec0} to conclude
\begin{align}\label{eq:delbd}
|\delta \bu(t)|_\infty & \leq |\delta \bu_0|_\infty e^{-\kappa  mt} 
+ \kappa  \cdot |\delta \bu_0|_2 \cdot e^{-\kappa  mt} \int_0^t e^{\kappa  (m-\eta)\sigma}d\sigma   \nonumber \\
& \leq |\delta \bu_0|_\infty \cdot e^{-\kappa  mt}  +  K \cdot |\delta \bu_0|_2\cdot e^{-\kappa  \eta t}, \qquad K:=\frac{1}{m-\eta} \leq \frac{2}{m}. 
\end{align}
\end{remark}

\paragraph{{\bf Bound on the total distance traveled}}\label{tdt}
Here we show that with large enough $\kappa$, the time interval  $[0,\mathcal{T}]$ used in Propositions \ref{endec} and \ref{veldec} can be extended indefinitely.

\begin{proposition}[Proof of theorem \ref{flockcont}]\label{tdtp}
Let $(\rho, \bu)$ be a strong solution to \eqref{system} in $[0,{\mathcal T}]$ subject to initial data $(\rho_0,\bu_0)\in (L^1(\Sp_0), C^1(\Sp_0))$ satisfying assumption \ref{ass:s0}. In particular, the $\text{supp}(\rho_0)$ is assumed to be chain connected at scale $\rc\leq \frac{1}{6}D_\kphi$.
Further, assume that the amplitude of alignment is large enough
\begin{align}\label{kll}
\kappa\geq \kappa_0:=\frac{1}{\rc m}\Big(\frac{1}{2}|\delta \bu_{0}|_{\infty}+\frac{1}{\eta} |\delta \bu_0|_2\Big), \quad \eta \lesssim \frac{m^2}{n(\rc)}.
\end{align}
Then the chain connectivity of the associated flow map in \eqref{tal} persists  for all time, ${\mathcal T}=[0,\infty)$ and theorem \ref{flockcont} with \eqref{eqs:fc} follows.
\end{proposition}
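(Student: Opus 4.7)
The plan is a standard continuity/bootstrap argument anchored at the maximal time $\mathcal{T}$ defined in \eqref{tal}. Smoothness of the classical flow at $t=0$ yields $\mathcal{T}>0$, and I would assume toward a contradiction that $\mathcal{T}<\infty$. Since Propositions \ref{endec} and \ref{veldec} apply on the interval $[0,\mathcal{T}]$, the uniform-fluctuation decay \eqref{eq:delbd} is in force there:
\[
|\delta \bu(s)|_\infty \leq |\delta \bu_0|_\infty e^{-\kappa m s}+\tfrac{2}{m}|\delta \bu_0|_2\, e^{-\kappa \eta s}, \qquad s\in[0,\mathcal{T}].
\]
For any pair $\bx_0,\by_0 \in \Sp_0$ with $|\bx_0-\by_0|\leq \rc$, the flow equation \eqref{chardef} and the triangle inequality yield
\[
|\XX^t(\bx_0)-\XX^t(\by_0)| \leq \rc + \int_0^t |\delta \bu(s)|_\infty \,\ds, \qquad t\in[0,\mathcal{T}].
\]

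Integrating the exponential bound and using the threshold condition \eqref{kll} on $\kappa$, I obtain
\[
\int_0^{\mathcal{T}}|\delta \bu(s)|_\infty\,\ds \leq \frac{|\delta \bu_0|_\infty}{\kappa m}+\frac{2|\delta \bu_0|_2}{\kappa m \eta} = \frac{1}{\kappa m}\Big(|\delta \bu_0|_\infty+\tfrac{2}{\eta}|\delta \bu_0|_2\Big) \leq 2\rc.
\]
Consequently $|\XX^t(\bx_0)-\XX^t(\by_0)|\leq 3\rc$ for all $t\in[0,\mathcal{T}]$ and all admissible pairs. In fact, since the displayed bound came from the full improper integral over $[0,\infty)$ and the tail $\int_{\mathcal{T}}^\infty|\delta \bu(s)|_\infty\,\ds$ is strictly positive, the inequality is strict at $t=\mathcal{T}$, uniformly in pairs. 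Continuity of the flow in $t$ then extends the $3\rc$-separation bound to a slightly larger interval $[0,\mathcal{T}+\delta]$, contradicting the maximality of $\mathcal{T}$. Hence $\mathcal{T}=+\infty$.

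With $\mathcal{T}=+\infty$, the decay \eqref{eq:delbd} holds globally in time. Because the average velocity is conserved by the symmetric CS interaction, $\int \bu(t,\bx)\drho(\bx)\equiv \overline{\bu}_0$, Jensen's inequality gives $|\bu(t,\cdot)-\overline{\bu}_0|_\infty\leq |\delta \bu(t)|_\infty$, which is precisely \eqref{fc0}. The quantitative bounds \eqref{eq:fc1}--\eqref{eq:fc2} on $\kappa_0$ follow by substituting $\eta \sim m^2/\ind$ from Proposition \ref{endec} into \eqref{kll}, using $\ind\, m\lesssim 1$ and, in the convex case, the chain-length bound $\ind\lesssim D_{\Sp_0}/\rc$ of Remark \ref{rem:length}. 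The main difficulty---already absorbed in the two preceding propositions---was propagating chain connectivity and the local mass $m$ along the flow; the step here is a clean Gronwall-type bootstrap that allocates the displacement budget $2\rc$ between the two exponential modes in \eqref{eq:delbd} via the weights $\tfrac{1}{2}$ on $|\delta \bu_0|_\infty$ and $\tfrac{1}{\eta}$ on $|\delta \bu_0|_2$ appearing in \eqref{kll}. The only remaining subtlety is the strict-inequality argument used to extend past $\mathcal{T}$, and this is handled by comparing the partial integral to the strictly larger improper one.
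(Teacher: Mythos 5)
Your proof is correct and takes essentially the same route as the paper: integrate the flow-map fluctuations, use the decay \eqref{eq:delbd} (valid on $[0,\mathcal{T}]$) to bound $\int_0^\tau|\delta\bu(s)|_\infty\,\ds$ by the improper integral, which under the threshold \eqref{kll} is $\leq 2\rc$, so $\rc$-separated initial pairs never stray beyond $3\rc$ and $\mathcal{T}=\infty$, whence \eqref{fc0} and \eqref{eqs:fc} follow. One caveat in your strict-inequality step: you invoke positivity of $\int_{\mathcal{T}}^\infty|\delta\bu(s)|_\infty\,\ds$, but $|\delta\bu(s)|_\infty$ is not controlled past $\mathcal{T}$ in the bootstrap and could even vanish; the strictness should instead be read off the tail of the exponential \emph{majorant} $|\delta\bu_0|_\infty e^{-\kappa m s}+\tfrac{2}{m}|\delta\bu_0|_2 e^{-\kappa\eta s}$, which is strictly positive whenever the initial fluctuation is nontrivial (and if it is trivial, there is nothing to prove).
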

Indeed,  integrating the fluctuations of \eqref{chardef}, $\ddt \delta \XX^t(\cdot)=\delta\bu(t,\cdot)$, and utilizing \eqref{eq:delbd} we find the uniform-in-$\tau$ bound
\[
|\XX^{\tau}(\bx_0)-\XX^{\tau}(\by_0)|  \leq |\bx_0-\by_0|+\int_{0}^{\tau}|\delta \bu(t)|_{\infty}\hspace{1mm}\dt
< |\bx_0-\by_0|+ \frac{|\delta \bu_{0}|_{\infty}}{\kappa  m} +  \frac{2}{m}\cdot \frac{|\delta \bu_0|_2}{\kappa  \eta}, \quad \tau<\infty.
\]
Thus, large enough  $\kappa$, \eqref{kll}, guarantees that initial fluctuations of size $|\bx_0-\by_0|\leq \rc$ do not expand more than $ |\XX^{\tau(\kappa)}(\bx_0)-\XX^{\tau(\kappa)}(\by_0)|\leq 3\rc$ for \emph{all} $\tau$'s, and hence the maximal time of connectivity 
${\mathcal T}(\kappa)\!:=\sup{\tau(\kappa})\!=\!\infty$, proving the exponential flocking asserted in theorem \ref{flockcont}.

\begin{remark}[{\bf Large communication scale}]
Observe that if the essential support of  $\kphi$ is large, $D_\kphi \gg 1$, then $\kappa_0$ with $\rc \gg 1$ (recall $m\gtrsim 1$ and hence $n(\rc) \lesssim 1$)  recovers \emph{unconditional flocking} provided
\[
\mathop{\min}_{h\leq \rc}\kphi(h)\rc \ \stackrel{\rc\rightarrow \infty}{\longrightarrow} \ \infty,
\]
which is only slightly stronger than the `fat tail' condition \eqref{eq:flocking}.
\end{remark}

\paragraph{{\bf Proof of the discrete case (theorem \ref{flockpart})}}\label{mainproof}
We consider the discrete  solution $\rho_0=\frac{1}{N}\sum_{j=1}^N \delta_{\vx_j(0)}(\vx)$, with the discrete support $\Sp_0=\{\vx_j(0)\}_{j=1}^N$, normalized to have a unit mass so that \eqref{eq:mass} holds.
The assumed chain connectivity of the discrete $\Sp_0$ involves no more than $n=N$ balls with diameters $\rc$, connecting every initial pair $(\vx_i(0), \vx_j(0))$. The average mass of each $r$-ball centered $\Sp_0$ --- that is, at one of the $\vx_j(0)$'s has mass $\geq m=\frac{1}{N}$ so that we maintain the counting bound $nm\lesssim 1$.
With this we recover the bound \eqref{kll} with  $\eta\lesssim \nicefrac{1}{N^3}$ (and since $N|\delta \vv_0|_2 > |\delta \vv_0|_\infty$)
\be\label{dll}
 \kappa_0\lesssim \frac{N^4}{\rc}|\delta \vv_0|_2.
\ee

\section{Uniformly bounded density  away from vacuum}\label{ubond}
The density in \eqref{system} plays a central role as the 
carrier of local averaging, and hence a uniform bound on the density $\rho(t,\cdot)$ away from vacuum is essential for the existence of strong solutions to \eqref{system} and their asymptotic behavior. This is particularly relevant in the case of \emph{singular interaction kernels}, \cite{ST2017a,ST2017b,ST2017c,DKRT2018, ST2018}. As noted in \cite{ST2018}, the lower bound $\rho\gtrsim \nicefrac{1}{\sqrt{1+t}}$ will suffice to yield unconditional flocking in the general multiD case.  In \cite{ST2017a, DKRT2018,ST2018} it was shown that the 1D density is controlled from below, $\rho\geq  1/(1+t)$, while in \cite{ST2017b} this estimate was improved to a uniform bound $\rho\geq c_0>0$. In all cases, the  bounds depend on the long-range support of $\kphi$. Here we treat the case of short-range kernels, proving the uniform bound away from vacuum  by adapting our  chain connectivity to the proof \cite[Lemma 3.1]{ST2017b}.

\begin{proof}[Proof of Corollary \ref{boundro}]
As in \cite{ST2017b}, we use the quantity $q$ 
\[
q(t,x):=\frac{1}{\rho(t,y)}\left(u_x(t,x) + \kappa\int_{y\in \bbt} \kphi(|x-y|)(\rho(t,y)-\rho(t,x))dy\right)
\]
to rewrite the mass equation \eqref{system}$_1$ in an equivalent   diffusive form 
(abbreviating $\rho(t,\cdot)=\rho(\cdot)$)
\begin{align*}
\rho_t(x) + u\rho_x(x) = -q\rho^2(x) + \kappa\rho(x)\int_{\mathbb T}\kphi(|x-y|)(\rho(y)-\rho(y))\dy.
\end{align*}
Evaluating the above expression at $\rho_-(t)=\min_{x\in{\mathbb T}}\rho(t,x)$ leads to
\begin{align*}
\ddt\rho_- = -q\rho_-^2 + \kappa\rho_-\int_{\mathbb T}\kphi(|x_- - y|)(\rho(y)-\rho_-)\dy \geq -q\rho_-^2 + \kappa\underbrace{\rho_-\int_{F^t_-}\kphi(|x_- - y|)(\rho(y)-\rho_-)\dy}_{\geq 0}.
\end{align*}
Here $x_-=x_-(t)$ is any point at which $\rho(t)$ attains its minimum, and $F_-^t\in {\mathcal F}$ is any local neighborhood surrounding   $x_-\in F_-^t$. 
Recall that according to \eqref{S2}, the diameter ${\rm diam}(F_-^t)\leq 3r <\nicefrac{1}{2}D_\kphi$ and hence $\kphi(|x_--y|)\geq 1$ for $y\in F^t_-$. Thus, since $\rho(F_-^t) \geq m$,
\begin{align*}
\ddt\rho_- \geq -q\rho_-^2 + \kappa\rho_-\cdot\int_{ F_-^t}\kphi(|x_--y|)(\rho(\by)-\rho(x_-))\dy
\geq  -q\rho_-^2 +\kappa \rho_- \cdot m - \rho_-^2\kappa |F_-^t|,
\end{align*}
where the volume of $F_-^t$ is bounded since its diameter is. We end up with
\begin{align*}
\ddt\rho_- &\gtrsim  -(|q|_\infty+\kappa) \rho_-^2 + \kappa  m \cdot \rho_-.
\end{align*}
What signifies the quantity $q$ is that it is being transported, $q_t+uq_x=0$, e.g., \cite[\S2.2]{ST2017a}, hence  $|q|_\infty \leq |q_0|_\infty$, and we conclude with a uniform lower-bound \eqref{eq:rmin}
\[
\min_{x\in \bbt}\rho(t,x)\gtrsim\min\Big\{\rho_{-}(0), \frac{\kappa m}{|q_0|_{\infty}+\kappa}\Big\} >0.
\]
\end{proof}

\section{Closing remarks}\label{remarks}
While we do not claim optimality of the threshold $\kappa_0$, we observe that  the existence of such threshold  is required: it is easy to construct a counterexample where the intensity of the interaction $\kappa$ is not sufficient to ensure flocking for given initial condition and $D_\kphi$. Indeed, let us consider a bounded communication weight satisfying \eqref{psi} with $D_\kphi=4$ and two particles on a line with $x_1(0)=-1$, $x_2(0)=1$, $v_1(0)=-1$, $v_2(0)=1$. Then, putting
\begin{align*}
x(t)=x_2(t)-x_1(t)>0,\qquad v(t)=v_2(t)-v_1(t)>0, 
\end{align*}
we reformulate system \eqref{cs} into
\begin{align}\label{2part}
\dot{x} =v,\qquad \dot{v} = -2\kappa v\kphi(x) = -2\kappa\ddt\kPhi(x), \qquad \Phi(r):=\int_0^r \phi(h)\dh.
\end{align}
Integration of \eqref{2part} leads to
\begin{align*}
v(t) &= 2\kappa\kPhi(x(0))-2\kappa\kPhi(x(t)) + v(0)\\
x(t) &= 2\kappa t\kPhi(x(0)) -2\kappa\int_0^t\kPhi(x(t))\dt +tv(0)+x(0).
\end{align*}
Since $\kphi$ is bounded and $0$ outside of, say, $[0,2D_\kphi]$ then $\kPhi$ is also bounded. Therefore, as long as $x\leq 2D_\kphi=8$ we have
\begin{align*}
x(t) &\geq -8\kappa t|\kPhi|_\infty +tv(0)+x(0).
\end{align*}
Now, if $\displaystyle \kappa\leq \nicefrac{v(0)}{16|\kPhi|_\infty}$ then
$x(t) \geq \frac{t}{2}v(0)+x(0)$ and $x$ grows steadily with the rate that can be bounded from below by $t v(0)/2$  and eventually reaches $2D_\kphi=8$. As soon as this happens $\kphi(x)=0,$ and the particles move away from each other with a constant velocity. A similar example can be produced for singular $\kphi$.


\begin{thebibliography}{99}

 \bibitem{CCMP2017} 
   {Jos\'e A.  Carrillo, Young-Pil  Choi, Piotr B. Mucha Jan  and Peszek},
	Sharp conditions to avoid collisions in singular {C}ucker-{S}male interactions,
	Nonlinear Analysis 37: 317--328, 2017.
	
    \bibitem{CFTV2010}
    J.~A. Carrillo, M.~Fornasier, G.~Toscani, and F.~Vecil.
    \newblock Particle, kinetic, and hydrodynamic models of swarming, 2010.
    \newblock in \emph{Mathematical Modeling of Collective Behavior 
    in Socio-Economic and Life Sciences} 
    (G. Naldi L. Pareschi G.  Toscani eds.), Birkhauser, pages 297--336, 2010.
    
    \bibitem{CCTT2016}
    Jos\'e~A. Carrillo, Young-Pil Choi, Eitan Tadmor and Changhui Tan.
    \newblock Critical thresholds in 1{D} {E}uler equations with non-local forces.
    \newblock {\em Math. Models Methods Appl. Sci.}, 26(1):185--206, 2016.

    \bibitem{CS2007a}
    Felipe Cucker and Steve Smale.
    \newblock Emergent behavior in flocks.
    \newblock {\em IEEE Trans. Automat. Control}, 52(5):852--862, 2007.
    
    \bibitem{CS2007b}
    Felipe Cucker and Steve Smale.
    \newblock On the mathematics of emergence.
    \newblock {\em Jpn. J. Math.}, 2(1):197--227, 2007.

 \bibitem{DMPW2018}
    Rapha\"{e}l Danchin, Piotr~B. Mucha, Jan Peszek, and Bartosz Wr\'{o}blewski.
    \newblock Regular solutions to the fractional euler alignment system in the
    {B}esov spaces framework.
    \newblock {\em arXiv:1804.07611}, 2018.
    
    \bibitem{DKRT2018}
    Tam Do, Alexander Kiselev, Lenya Ryzhik, and Changhui Tan.
    \newblock Global regularity for the fractional {E}uler alignment system.
    \newblock {\em Archive for Rational Mechanics and Analysis}, 228(1):1--37,
    2018.
    
    \bibitem{FK2017}
    Alessio Figalli and Moon-Jin Kang.
    \newblock A rigorous derivation from the kinetic {C}ucker-{S}male model to the
    pressureless {E}uler system with nonlocal alignment.
    \newblock {\em arXiv:1702.08087v1}, 2017.

    \bibitem{HHK2010}
    Seung-Yeal Ha, Taeyoung Ha, and Jong-Ho Kim.
    \newblock Emergent behavior of a {C}ucker-{S}male  type particle model with
    nonlinear velocity couplings.
    \newblock {\em IEEE Trans. on Automatic Control}, 55(7), 2010.
    
    \bibitem{HKPZ2018}
     S.-Y. Ha, J. Kim, J. Park and X. Zhang,
	Uniform stability and mean-field limit for the augmented Kuramoto model. 
	NHM 	13(2): 297-322, 2018.
     
    \bibitem{HL2009}
    Seung-Yeal Ha and Jian-Guo Liu.
    \newblock A simple proof of the {C}ucker-{S}male flocking dynamics and
    mean-field limit.
    \newblock {\em Commun. Math. Sci.}, 7(2):297--325, 2009.

    \bibitem{HT2008}
    Seung-Yeal Ha and Eitan Tadmor.
    \newblock From particle to kinetic and hydrodynamic descriptions of flocking.
    \newblock {\em Kinet. Relat. Models}, 1(3):415--435, 2008.
    
    \bibitem{Ha2013}
    J.~Haskovec.
    \newblock Flocking dynamics and mean-field limit in the {C}ucker-{S}male type
    model with topological interactions.
    \newblock {\em Phys. D}, 261(15):42--51, 2013.
    
    \bibitem{HeT2017}
    Siming He and Eitan Tadmor.
    \newblock Global regularity of two-dimensional flocking hydrodynamics.
    \newblock {\em Comptes rendus - Mathematique Ser. I}, 355:795--805, 2017.

    \bibitem{Ji2018}
    Chunyin Jin.
    \newblock Flocking of the {M}otsch-{T}admor model with a cut-off interaction
    function.
    \newblock {\em J Stat Phys}, 171:345--360, 2018.

    \bibitem{MT2011}
    Sebastien Motsch and Eitan Tadmor.
    \newblock A new model for self-organized dynamics and its flocking behavior
	\newblock {\em Journal of Statistical Physics}, 144(5): 932--947, 2011.
	    
    \bibitem{MT2014}
    Sebastien Motsch and Eitan Tadmor.
    \newblock Heterophilious dynamics enhances consensus.
    \newblock {\em SIAM Rev.}, 56(4):577--621, 2014.
    
    \bibitem{MP2018}
    Piotr~B. Mucha and Jan Peszek.
    \newblock The {C}ucker-{S}male equation: singular communication weight,
    measure-valued solutions and weak-atomic uniqueness.
    \newblock {\em Arch. Rational Mech. Anal.}, 227:273--308, 2018.
    
    \bibitem{Pe2014}
    J.~Peszek.
    \newblock Existence of piecewise weak solutions of a discrete {C}ucker-{S}male's
    flocking model with a singular communication weight.
    \newblock {\em J. Differential Equations}, 257:2900--2925, 2014.
    
    \bibitem{Pe2015}
    Jan Peszek.
    \newblock Discrete {C}ucker-{S}male flocking model with a weakly singular
    weight.
    \newblock {\em SIAM J. Math. Anal.}, 47(5):3671--3686, 2015.
    
    \bibitem{PS2017}
    David Poyato and Juan Soler.
    \newblock Euler-type equations and commutators in singular and hyperbolic
    limits of kinetic {C}ucker-{S}male models.
    \newblock {\em Math. Models Methods Appl. Sci.}, 27(6):1089--1152, 2017.

  \bibitem{Sh2018}
    R.~Shvydkoy.
    \newblock Global existence and stability of nearly aligned flocks.
    \newblock {\em arXiv:1802.08926}, 2018.

    \bibitem{ST2017a}
    Roman Shvydkoy and Eitan Tadmor.
    \newblock Eulerian dynamics with a commutator forcing.
    \newblock {\em Transactions of Mathematics and Its Applications}, 1(1):tnx001,
    2017.
    
    \bibitem{ST2017b}
    Roman Shvydkoy and Eitan Tadmor.
    \newblock Eulerian dynamics with a commutator forcing {II}: {F}locking.
    \newblock {\em Discrete Contin. Dyn. Syst.}, 37(11):5503--5520, 2017.
    
    \bibitem{ST2017c}
    Roman Shvydkoy and Eitan Tadmor.
    \newblock Eulerian dynamics with a commutator forcing {III}: {F}ractional
    diffusion of order $0<\a<1$.
    \newblock {\em Physica D}, 376-377 (2018) 131--137.

	\bibitem{ST2018}
	Roman Shvydkoy and Eitan Tadmor.
    \newblock Topological models for emergent dynamics with short-range interactions.
    \newblock arXiv:1806.01371

   \bibitem{TT2014}
    Eitan Tadmor and Changhui Tan.
    \newblock Critical thresholds in flocking hydrodynamics with non-local
    alignment.
    \newblock {\em Philos. Trans. R. Soc. Lond. Ser. A Math. Phys. Eng. Sci.},
    372(2028):20130401, 22, 2014.
    
    \bibitem{Ta2017}
    Changhui Tan.
    \newblock Finite time blow up for a fluid mechanics model with nonlocal
    velocity.
    \newblock {\em ArXiv}, 2017.
\end{thebibliography}
\end{document}